\documentclass[11pt, a4paper]{article}

\usepackage[utf8]{inputenc}
\usepackage{amsmath}
\usepackage{amssymb}
\usepackage{amsthm}
\usepackage[margin=1in]{geometry}
\usepackage{hyperref}
\usepackage{graphicx}
\usepackage[backend=bibtex]{biblatex}
\usepackage[printonlyused]{acronym}
\usepackage{authblk}

\usepackage{pgfplots}
\usepackage{pgfplotstable}
\usepackage{todonotes}
\pgfplotsset{compat=1.18}

\usepgfplotslibrary{groupplots}
\usepgfplotslibrary{statistics}

\definecolor{steelblue}{RGB}{70,130,180}
\definecolor{coral}{RGB}{255,127,80}

\newtheorem{theorem}{Theorem}[section]
\newtheorem{definition}{Definition}[section]
\newtheorem{remark}{Remark}[section]
\newtheorem{lemma}{Lemma}[section]

\newtheorem{conjecture}{Conjecture}[section]

\newcommand{\E}{\mathbb{E}}
\newcommand{\R}{\mathbb{R}}
\newcommand{\Var}{\mathrm{Var}}
\newcommand{\Bias}{\mathrm{Bias}}
\newcommand{\MSE}{\mathrm{MSE}}
\newcommand{\MISE}{\mathrm{MISE}}
\newcommand{\Ocal}{\mathcal{O}}
\newcommand{\ee}{\mathrm{e}}
\newcommand{\dd}{\mathrm{d}}

\title{Data driven extreme value distribution estimation: \\ Derivation of the Mean Integrated Squared Error, optimal bandwidth selection and stability conditions}
\author[1]{Michael Sandbichler\thanks{Corresponding author: michael.sandbichler@datalabhell.at}}
\author[1]{Tobias Hell}
\affil[1]{Data Lab Hell GmbH, Zirl, Austria}
\date{\today}

\begin{document}

\maketitle
\begin{abstract}
We introduce the~\ac{ddevd} estimator, a kernel-based method for estimating extreme value distributions from data.
We derive its~\ac{mise} in detail, use it to compute the optimal bandwidth and establish stability conditions for the bandwidth optimization procedure.
\end{abstract}

\tableofcontents

\section{Introduction}

The statistical analysis of extreme events is a cornerstone of risk assessment in fields as diverse as finance, insurance, engineering, hydrology and climate science~\cite{embrechts2013modelling, mcneil2015quantitative, castillo2012extreme, katz2002statistics,schellander2018modeling}.
The primary goal of extreme value statistics is to quantify the probability of events that are more extreme than any previously observed, a task that requires careful extrapolation beyond the range of the data.
Foundational results from \ac{evt} provide the theoretical basis for such extrapolation.

A classical approach in \ac{evt} is the \ac{bmm}~\cite{gumbel1958statistics, ferreira2015block}, in which the observed period is divided into non-overlapping chunks and the maximum value within each chunk is recorded.
Then, by virtue of the Fisher-Tippett-Gnedenko theorem the distribution of these maxima converges to a \ac{gev} distribution. These distributions broadly fall into three
families: the Gumbel, Fréchet, and Weibull distributions. These distribution differ mainly in their tail behaviour, a fact which we will use later in our analysis.
The \ac{gev} distribution is characterized by three parameters: location, scale, and shape. The shape parameter, in \ac{evt} typically denoted by $\gamma$, is arguably
the most important one, as it determines the tail behavior of the distribution~\cite{haan2006extreme}.
Fitting this distribution to the observed data then allows us to make probabilistic statements about extreme events within a block. The problem with this approach is that
it requires a large number of blocks to obtain reliable estimates of the parameters and ideally a lot of data within each block to ensure that a reliable maximum is captured.
Then in the end a large amount of data is discarded - because for the fitting only a single value for each block is used. To illustrate this, let us consider an example from 
climate science. In order to obtain 'yearlyness' of extreme events (think 100-year events), one would typically have to analyze many years of data in order to obtain a reliable
estimate while throwing away 364 values for each year~\cite{schellander2019error}.

A more data efficient parametric approach is the \ac{pot} method, which uses all data above a certain threshold.
This method relies on the Balkema-de Haan-Pickands theorem, which shows that the distribution of the excesses over the threshold can be modeled by a
\ac{gpd} for a large class of base distributions~\cite{drees1998smooth,haan2006extreme,pickands1975statistical}. The \ac{gpd} is also characterized by a shape parameter that has a direct correspondence to the 
shape parameter of the \ac{gev} distribution.

Both, the \ac{bmm} and \ac{pot} methods remain standard tools in the analysis of extreme events.
However a key limitation of both is their reliance on the assumption that we operate in the asymptotic regime.
Their performance may be severely hampered if too few data points are available for the estimation of the extreme value parameters, a problem that can for example occur if the 
data in question has only been recorded for a relatively short period of time (which in climate science can be a few years).

A new method to tackle the mentioned problems is to use a metastatistical approach~\cite{marani2015metastatistical,miniussi2020metastatistical}.
The core idea is to model the base distribution and by using the classical property of the \ac{cdf} of 
the maximum over $n$ random variables with this distribution. Let us shortly outline this approach.
Suppose we are given $n$ i.i.d. observations of a random variable $X$ within a single block. Then the \ac{cdf} of the maximum of these observations is given by
\begin{equation}
    F_{\max}(y) = \left(F_X(y)\right)^n,
\end{equation}
where $F_X$ is the \ac{cdf} of the base distribution. This is perhaps the most classical result in extreme value theory, 
but the key insight is that this relationship allows us to derive the \ac{cdf} of the maximum directly from the \ac{cdf} of the base distribution.
So if we are given $m$ blocks with $n_i$ observations each for $i=1,\ldots,m$, we can use this relationship to estimate the extreme value distribution over the blocks via
\[F_{MEV}(y) := \frac1m \sum_{i=1}^m F_{\max,i}(y) = \frac1m \sum_{i=1}^m \left(F_{X,i}(y)\right)^{n_i},\]
where $F_{X,i}$ is the \ac{cdf} of the base distribution for block $i$.
This distribution is called the \emph{metastatistical extreme value distribution} and will be the main focus of this paper.
A drawback of this approach is that $F_{X,i}$ is typically chosen to be a parametric distribution, which may not accurately reflect the true underlying distribution of the data.

In this paper we introduce the \ac{ddevd} estimator as a non-parametric alternative that does not rely on such strong assumptions.
It can be seen as a nonparametric plug-in solution that uses a KDE-based approach to estimate the \ac{cdf} of the base distribution occurring in the definition of the \ac{mev} distribution.
We use the data points within each block to estimate the \ac{cdf} of the base distribution~\cite{nadaraya1964some, silverman2018density}.
In kernel density estimation the selection of the bandwidth is of crucial importance~\cite{sheather1991reliable, jones1996brief}, and we will also address this issue in the context of the \ac{ddevd} estimator.

The paper will be structured as follows. In the following section we will formally define the problem setting and the \ac{ddevd} estimator.
There we will also recall some classical definitions and results that will be useful later on.
In Section~\ref{sec:main_results} we will present the main results of our analysis. We will show asymptotic expansions for bias and variance and how
these can be used to derive a strategy to select the bandwidth of the estimator optimally. Further, we will examine the stability of this bandwidth
selection procedure depending on the tail behaviour of the base distribution. 
This will lead us to asymptotic constraints between the number of blocks and the number of data points within each block and we will see that the stability
of this approach depends fundamentally on the tail behaviour of the base distribution, which we will characterize by its extreme value index.
This analysis gives us a clear guideline as to when the \ac{ddevd} estimator can be readily applied and what its limitations are.
The main theorems will be stated in this section, but the majority of the proofs and derivations will be deferred to the appendix due to their technical nature.
Finally in Section~\ref{sec:conclusion} we will recap the findings of the paper and discuss potential future work.

\subsection{Problem Statement}

We consider the task of estimating a target~\ac{cdf}, $F(y)$, using a kernel-based method.
The structure of the problem is as follows. Suppose we are given $m$ blocks, each containing $n_i$ i.i.d. samples $X_{ij}$
drawn from a base distribution with~\ac{cdf} $F_X$. 
The goal is to estimate the target~\ac{cdf}
\[F(y) = \frac1m \sum_{i=1}^m F_X^{n_i}(y)\]
using a kernel estimator that aggregates the contributions from all blocks.

\begin{definition}[\ac{ddevd} Estimator]
    Let $X_{ij}$ be a random i.i.d. sample from a base distribution with~\ac{cdf} $F_X$, 
    $h = (h_{ij})_{i,j} > 0$ is the matrix of bandwidth parameters, and $m, n_i \in \mathbb{N}_{>0}$.
    Furthermore let $K$ be a~\ac{cdf} kernel function, which is a non-decreasing function with 
    $\lim_{y \to -\infty} K(y) = 0$ and $\lim_{y \to \infty} K(y) = 1$.
    We define the \emph{\ac{ddevd} estimator} $\hat F_h(y)$ as follows:
    \begin{equation}
        \hat F_h(y) = \frac{1}{m} \sum_{i=1}^m \left(\frac1{n_i}\sum_{j=1}^{n_i} K\left(\frac{y-X_{ij}}{h_{ij}}\right)\right)^{n_i}
    \end{equation}
    For notational convenience, we sometimes denote the $i$-th inner approximation as $F_{i,h}(y) = \frac1{n_i}\sum_{j=1}^{n_i} K\left(\frac{y-X_{ij}}{h_{ij}}\right)$,
    so that $\hat F_h(y) = \frac{1}{m} \sum_{i=1}^m F_{i,h}^{n_i}(y)$.
\end{definition}

The overall goal of this paper is to analyze the \ac{mise} of the estimator $\hat F_h(y)$.

\begin{definition}[Mean (Integrated) Squared Error]
    The \ac{mise} of the estimator $\hat F_h(y)$ is defined as:
    \begin{equation}
        \MISE(\hat F_h) := \E \int (\hat F_h(y) - F(y))^2 \mathrm{d}y
    \end{equation}
    This integral is taken over the entire domain of $y$ where the \ac{cdf} is defined.
    By interchanging the integration order, this term inside the integral is the \ac{mse} of the estimator at point $y$.
    \begin{equation}
        \MSE(y) = \E[(\hat F_h(y) - F(y))^2]
    \end{equation}
    Further, define the $q$-quantile version of the \ac{mise} ($q$-\ac{mise}) as follows:
    \begin{equation}
        \MISE_q(\hat F_h) := \E \int_{F^{-1}(q) }^\infty (\hat F_h(y) - F(y))^2 \mathrm{d}y
    \end{equation}
    where $F^{-1}(q)$ is the quantile function of $F$ at level $q$.
\end{definition}

The \ac{mise} is a common measure of the overall performance of an estimator, as it captures both bias and variance across the entire domain.
The $q$-quantile version of the \ac{mise} is particularly relevant in the context of extreme value estimation, as it focuses on the tail of the distribution, which is often of primary interest in applications.
In the following derivations, it will become clear that in our setting, the $q$-\ac{mise} is the relevant quantity to analyze.

Let us recall a standard result for the decomposition of the mean squared error of the estimator $\hat F_h(y)$:

\begin{theorem}[Bias-Variance Decomposition]\label{thm:bias_variance}
The \ac{mse} of the estimator $\hat F_h(y)$ can be decomposed into bias and variance components as follows:
\begin{equation}
    \MSE(y) = \Bias^2(\hat F_h)(y) + \Var(\hat F_h)(y)
\end{equation}
where the bias and variance are defined as:
\begin{align}
    \Bias(\hat F_h)(y) &:= \E[\hat F_h(y)] - F(y) \\
    \Var(\hat F_h)(y) &:= \E\left[(\hat F_h(y) - \E[\hat F_h(y)])^2\right]
\end{align}
\end{theorem}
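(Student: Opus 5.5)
The plan is to fix a point $y$ and expand the square after inserting and subtracting the mean $\mu(y) := \E[\hat F_h(y)]$.

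First, I check that every expectation involved is finite. Since $K$ is a \ac{cdf} kernel, it takes values in $[0,1]$. Hence each inner approximation satisfies $F_{i,h}(y) \in [0,1]$, and so does $F_{i,h}^{n_i}(y)$. It follows that $\hat F_h(y) \in [0,1]$ almost surely. The estimator is therefore a bounded random variable, and all its moments exist. The target $F(y)$ is a deterministic number in $[0,1]$.

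Next, I write
\[
\hat F_h(y) - F(y) = \bigl(\hat F_h(y) - \mu(y)\bigr) + \bigl(\mu(y) - F(y)\bigr)
\]
and square it. Taking expectations gives three terms:
\begin{itemize}
\item $\E[(\hat F_h(y)-\mu(y))^2]$, which is $\Var(\hat F_h)(y)$ by definition;
\item $(\mu(y)-F(y))^2$, which is deterministic and equals $\Bias^2(\hat F_h)(y)$;
\item the cross term $2(\mu(y)-F(y))\,\E[\hat F_h(y)-\mu(y)]$.
\end{itemize}
The cross term vanishes by linearity of expectation, because $\E[\hat F_h(y)-\mu(y)] = \mu(y)-\mu(y) = 0$. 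This yields the claimed identity pointwise in $y$.

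There is no real obstacle. The only point needing care is integrability, so that linearity may be applied and the cross term is genuinely zero rather than an undefined expression. This is secured by the boundedness argument above.

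If the decomposition is also wanted at the level of the \ac{mise} or the $q$-\ac{mise}, I would integrate the pointwise identity in $y$. Tonelli's theorem applies because all integrands are nonnegative, so the interchange of $\E$ and $\int$ mentioned in the definition of the \ac{mise} is justified. This gives $\MISE = \int \Bias^2 + \int \Var$, with either side possibly infinite.
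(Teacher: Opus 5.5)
Your proof is correct and follows the paper's argument exactly: you add and subtract $\E[\hat F_h(y)]$, expand the square, and observe that the cross term vanishes because $\E[\hat F_h(y)-\E[\hat F_h(y)]]=0$. Your integrability check ($\hat F_h(y)\in[0,1]$ since $K$ takes values in $[0,1]$) and your Tonelli remark for the MISE are sound refinements that the paper leaves implicit.
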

\begin{proof}
The proof is a straightforward computation, we add and subtract $\E \hat F_h(y)$ within the expression for the \ac{mse}.
\begin{align*}
\MSE(y) &= \E[(\hat F_h(y) - F(y))^2] \\
        &=  \E[((\hat F_h(y) - \E \hat F_h(y)) + (\E \hat F_h(y) - F(y)))^2]\\
        &= \underbrace{\E[(\hat F_h(y) - \E \hat F_h(y))^2]}_{= \Var(\hat F_h)(y)} + 2(\E \hat F_h(y) - F(y))\underbrace{\E[\hat F_h(y) - \E \hat F_h(y)]}_{=0}  + (\underbrace{\E \hat F_h(y) - F(y)}_{=\Bias(\hat F_h)(y)})^2
\end{align*}
\end{proof}

\section{Asymptotic expansions, optimal bandwidth selection and stability}\label{sec:main_results}

Apart from the introduction of the DDEVD, another contribution of this work is the asymptotic expansion of the bias and variance in terms of $h$, which we present in the following theorem.
Throughout this section we assume that the base distribution \ac{cdf} $F_X$ is at least twice continuously differentiable.
Further, we require that the kernel $K$ is a~\ac{cdf} kernel, i.e. it is non-decreasing and has limits $\lim_{y \to -\infty} K(y) = 0$ and $\lim_{y \to \infty} K(y) = 1$.
The distribution described by this kernel, as well as by $K^2$, should at least have first and second moments.

\begin{theorem}[Asymptotic Expansion of the Mean Squared Error]\label{thm:asymptotic_expansion}
The Mean Squared Error, $\MSE(y)$, has the following asymptotic expansion up to order $\Ocal(h^3)$:
\begin{align*}
    \MSE(y) &= \frac{1}{m^2} \left[ \left(\sum_i b_{0,i}\right)^2 + \sum_i V_{0,i} \right] \\ 
            &+ \frac{1}{m^2} \left[ 2\left(\sum_i b_{0,i}\right)\left(\sum_i \bar{h_i} b_{1,i}\right) + \sum_i \bar{h_i} V_{1,i} \right] \\ 
            &+ \frac{1}{m^2} \left[ \left(\sum_i \bar{h_i} b_{1,i}\right)^2 + 2\left(\sum_i b_{0,i}\right)\left(\sum_i \bar{h_i^2} b_{2a,i} + \sum_i \bar{h_i}^2 b_{2b,i}\right) + \sum_i \left( \bar{h_i^2} V_{2a,i} + \bar{h_i}^2 V_{2b,i} \right) \right] \\
            &+ \mathcal{O}(h^3)
\end{align*}
This result stems from the individual expansions for the bias and variance, where for the $i$-th block:
\begin{align*}
    \Bias_i(y) &= b_{0,i}(y) + \bar{h_i} b_{1,i}(y) + \bar{h_i^2} b_{2a,i}(y) + \bar{h_i}^2 b_{2b,i}(y) +\mathcal{O}(h^3)\\
    \Var(F_{i,h}^{n_i})(y) &= V_{0,i}(y) + \bar{h_i} V_{1,i}(y) + \bar{h_i^2} V_{2a,i}(y) + \bar{h_i}^2 V_{2b,i}(y) + \mathcal{O}(h^3)
\end{align*}
The overall bias and variance are given as $\Bias(y) = \frac1m \sum_i \Bias_i(y)$ and $\Var(y) = \frac1{m^2}\sum_i \Var_i(y)$.
Exact expressions and large $n$ approximations for the coefficients $b_{s,i}(y)$ and $V_{s,i}(y)$ are derived in the Appendix.
\end{theorem}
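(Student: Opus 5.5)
We combine Theorem~\ref{thm:bias_variance} with independence across blocks, then expand each block's bias and variance in the bandwidths. By Theorem~\ref{thm:bias_variance}, $\MSE(y)=\Bias^2(y)+\Var(y)$. Since $F=\frac1m\sum_i F_X^{n_i}$ and $\hat F_h=\frac1m\sum_i F_{i,h}^{n_i}$, we have $\Bias(y)=\frac1m\sum_i \Bias_i(y)$ with $\Bias_i(y):=\E[F_{i,h}^{n_i}(y)]-F_X^{n_i}(y)$. Because different blocks use disjoint, independent samples, the cross covariances vanish, so $\Var(y)=\frac1{m^2}\sum_i\Var(F_{i,h}^{n_i})(y)$. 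The theorem then reduces to the two per-block expansions. Substituting them, $\Bias^2=\frac1{m^2}(\sum_i\Bias_i)^2$ is expanded and collected by order in $h$: the order-zero term is $(\sum b_{0,i})^2$, the order-one term is the cross term $2(\sum b_{0,i})(\sum\bar{h_i}b_{1,i})$, and the order-two terms are $(\sum\bar{h_i}b_{1,i})^2$ together with $2(\sum b_{0,i})$ times the second-order bias coefficients. The variance contributes linearly, giving exactly the displayed formula.

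For the per-block expansions, we write $Z_j:=K((y-X_{ij})/h_{ij})$, so that $F_{i,h}=\frac1{n_i}\sum_j Z_j$ with independent $Z_j$. The multinomial theorem expresses $\E[F_{i,h}^{n_i}]$ and $\E[F_{i,h}^{2n_i}]$ as finite sums of products of moments $\E[Z_j^k]$. For each $k$ we expand by substituting $X=y-h_{ij}u$ and integrating by parts:
\[\E[K^k((y-X)/h)]=\int F_X(y-hu)\,\dd K^k(u)=F_X(y)-h f_X(y)\mu_{1,k}+\tfrac{h^2}{2}f_X'(y)\mu_{2,k}+\Ocal(h^3),\]
where $\mu_{r,k}$ is the $r$-th moment of the distribution with CDF $K^k$. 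This is where the assumptions that $K$ and $K^2$ have first and second moments and that $F_X\in C^2$ are used; for the $h^3$ remainder we would in fact need a slightly stronger condition, such as $F_X\in C^3$ or bounded $f_X'$ with a Lipschitz property, and this should be stated.

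We then insert these expansions into the polynomial (multinomial) expressions and Taylor-expand to second order in the $h_{ij}$. The first-order terms are linear in the $h_{ij}$ and symmetric in $j$ by exchangeability, so they aggregate into $\bar{h_i}=\frac1{n_i}\sum_j h_{ij}$. The second-order terms split into squares $h_{ij}^2$, which give $\bar{h_i^2}$ (coefficients $b_{2a,i},V_{2a,i}$), and products $h_{ij}h_{ik}$ with $j\neq k$, which give $\bar{h_i}^2$ (coefficients $b_{2b,i},V_{2b,i}$). Writing $\sum_{j\ne k}h_{ij}h_{ik}=n_i^2\bar{h_i}^2-n_i\bar{h_i^2}$ lets us absorb the correction into the $a$-coefficients. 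The order-zero term $b_{0,i}=\E[(\frac1{n_i}\sum_j \mathbf 1\{X_{ij}\le y\})^{n_i}]-F_X^{n_i}$ is the nonzero bias of the plug-in empirical CDF raised to a power, and $V_{0,i}$ is the corresponding variance. Their presence explains why the expansion has a nonzero $h^0$ term.

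The main obstacle is organising the multinomial bookkeeping so that the dependence on the individual $h_{ij}$ provably collapses to $\bar{h_i}$, $\bar{h_i^2}$ and $\bar{h_i}^2$. We plan to avoid raw multinomials by differentiating the map $(h_{i1},\dots,h_{in_i})\mapsto\E[F_{i,h}^{p}]$ for $p=n_i,2n_i$ at $h=0$. First derivatives are identical in each coordinate by symmetry. Second derivatives take only two values, diagonal and off-diagonal, which immediately yields the stated structure. The explicit coefficients are then obtained, and large-$n$ approximations can follow via a normal approximation of $F_{i,h}$, as deferred to the Appendix.
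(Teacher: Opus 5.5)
Your proposal is correct. Its outer layer (bias--variance decomposition, independence across blocks giving $\Var(y)=\frac1{m^2}\sum_i\Var(F_{i,h}^{n_i})(y)$, then squaring $\frac1m\sum_i\Bias_i$ and collecting by order) coincides with Appendix~\ref{appendix:asymptotic}. Your per-block expansion, however, takes a genuinely different route.

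The paper expands $\E[F_{i,h}^{n}]$ binomially around $\E[F_{i,h}]$ and replaces the central moments of $F_{i,h}$ by Gaussian ones via Lemma~\ref{lemma:central_moments}. It then observes that $\E[F_{i,h}]$ and $\sigma_{i,h}^2$ depend on the bandwidths only through $\bar{h_i}$ and $\overline{h_i^2}$. The three-statistic structure falls out of products of these two expansions, together with closed-form coefficients ($FY_{s,N}$, $\rho$, $z_1$, $z_2$). You instead work with the exact multinomial expansion and obtain the collapse to $\bar{h_i}$, $\overline{h_i^2}$, $\bar{h_i}^2$ from exchangeability alone.

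\textbf{What your route buys.} It proves the structure for the true $\E[F_{i,h}^{p}]$, not for a Gaussian surrogate whose discrepancy from the true quantity is controlled only as $n_i\to\infty$. You also rightly flag that an $\Ocal(h^3)$ remainder needs more than $F_X\in C^2$ and second kernel moments; one also needs third absolute moments of $K$.

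\textbf{What it costs.} Your coefficients are only implicit: they are combinatorial sums involving the first two moments of every $K^k$ with $k\le 2n_i$. The explicit formulas used in Theorem~\ref{thm:optimizer} and Theorem~\ref{theorem:stability} therefore still require the paper's normal-approximation computation. Conversely, the paper's coefficients depend only on moments of $K$ and $K^2$ precisely because higher cumulants are discarded. They are thus approximations to your exact coefficients, not exact expressions.
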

\begin{proof}
See Appendix~\ref{appendix:asymptotic}.
\end{proof}

Note that the quality of such an expansion is dependent on the size of the bandwidth - smaller bandwidths yield a better approximation.
This can for example be achieved by scaling the data by a fixed factor to the range $[0,1]$ before applying the bandwidth selection procedure.
When rescaling by a factor of $s$ and computing the optimal bandwidth $h_{opt, s}$, the optimal bandwidth $h_{opt, s}$ can be expressed in terms of the original bandwidth $h_{opt}$ as follows:
\begin{align*}
    h_{opt, s} &= s h_{opt}
\end{align*}
This follows directly from the scaling property of the kernel density estimator and is a special case of more general transformations that we will discuss in Section~\ref{sec:transforms}.

By integrating the MSE over all $y$, we obtain the Mean Integrated Squared Error (MISE). We denote by $\mathbf{h} = (h_1, \ldots, h_m)^T$ a bandwidth vector that assigns a constant bandwidth to each block.

\begin{theorem}[Optimal Bandwidth]\label{thm:optimizer}
The bandwidth vector $\mathbf{h}_{opt}$ that minimizes the second-order approximation of the MISE (up to order $h^3$) is given by:
\begin{equation}\label{eq:h_opt}
    \mathbf{h}_{opt} = -\frac{1}{2}\mathbf{Q}^{-1}\mathbf{c}
\end{equation}
where $\mathbf{c} \in \R^m$ is the vector of coefficients for the linear terms in $\mathbf{h}$, and $\mathbf{Q} \in \R^{m \times m}$ is the Hessian matrix of the quadratic terms in $\mathbf{h}$ from the MISE expansion. The components of $\mathbf{c}$ and $\mathbf{Q}$ are given by:
\begin{align}
    c_i &= \int \left( 2\left(\sum_{j=1}^{m} b_{0,j}(y)\right)b_{1,i}(y) + V_{1,i}(y) \right) \mathrm{d}y \\
    Q_{kl} &= \int \left( b_{1,k}(y)b_{1,l}(y) + \delta_{kl} \left(2 \sum_{j=1}^{m}b_{0,j}(y) b_{2,k}(y) + V_{2,k}(y)\right) \right) \mathrm{d}y
\end{align}
where $b_{2,k} = b_{2a,k} + b_{2b,k}$ and $V_{2,k} = V_{2a,k} + V_{2b,k}$.
The optimal bandwidth vector obtained in this fashion is optimal also in a global sense, i.e. it also optimizes all individual bandwidths.
\end{theorem}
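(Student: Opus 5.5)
The plan is to specialise Theorem~\ref{thm:asymptotic_expansion} to block-constant bandwidths, integrate over $y$, and then minimise the resulting quadratic in $\mathbf h$.

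\textbf{Step 1: specialise to block-constant bandwidths.} If $h_{ij}=h_i$ for all $j$, then the block averages satisfy $\bar{h_i}=h_i$ and $\bar{h_i^2}=\bar{h_i}^2=h_i^2$. Substituting into the expansion of Theorem~\ref{thm:asymptotic_expansion}, the terms group by degree in $\mathbf h$:
\begin{itemize}
\item the degree-two terms in $h_i^2$ combine into $h_i^2\,(2\sum_j b_{0,j}\,b_{2,i}+V_{2,i})$, with $b_{2,i}=b_{2a,i}+b_{2b,i}$ and $V_{2,i}=V_{2a,i}+V_{2b,i}$;
\item the term $(\sum_i h_i b_{1,i})^2$ expands to $\sum_{k,l}h_kh_l\,b_{1,k}b_{1,l}$.
\end{itemize}

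\textbf{Step 2: integrate over $y$.} Integrating term by term, with the interchange justified by Fubini and the moment assumptions on $K$ and $K^2$, gives
\[
m^2\,\MISE(\mathbf h)=C_0+\mathbf c^T\mathbf h+\mathbf h^T\mathbf Q\,\mathbf h+\mathcal O(h^3).
\]
Here $C_0=\int\big((\sum_i b_{0,i})^2+\sum_i V_{0,i}\big)\,\dd y$, and $\mathbf c$ and $\mathbf Q$ are exactly as stated.

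The constant $m^2$ is irrelevant for the minimiser. The normalisation convention must be checked: written as $\mathbf h^T\mathbf Q\mathbf h$, the quadratic has Hessian $2\mathbf Q$. Setting the gradient $\mathbf c+2\mathbf Q\mathbf h$ to zero then yields $\mathbf h_{opt}=-\tfrac12\mathbf Q^{-1}\mathbf c$, which is consistent with \eqref{eq:h_opt}.

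\textbf{Step 3: show the critical point is a global minimiser.} I would write
\[
\mathbf Q=\mathbf G+\mathbf D,\qquad \mathbf G=\Big(\int b_{1,k}b_{1,l}\,\dd y\Big)_{k,l}\ \text{(a Gram matrix, PSD)},\qquad \mathbf D=\operatorname{diag}(d_k),
\]
with $d_k=\int(2\sum_j b_{0,j}b_{2,k}+V_{2,k})\,\dd y$. If $d_k>0$ for all $k$, then $\mathbf Q$ is positive definite, hence invertible. The quadratic is then strictly convex, so the stationary point is its unique global minimiser over all of $\R^m$.

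This is what ``optimal also in a global sense'' should mean. Joint minimisation of a strictly convex function gives a point at which each coordinate $h_i$ minimises the function with the others held fixed. That point is also the unique solution of the coupled first-order conditions, so no separate block-by-block optimisation can do better.

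\textbf{Main obstacle.} The hardest part is justifying $d_k>0$, or more generally $\mathbf Q\succ0$.

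\begin{itemize}
\item \emph{Sign of the variance term.} $V_{2,k}$ need not be pointwise positive, and the cross term $2\sum_j b_{0,j}b_{2,k}$ has no definite sign. My first attempt would use the appendix's large-$n$ approximations. In the standard KDE-CDF setting $b_{0,i}=0$, since the bias vanishes at $h=0$ because the empirical CDF is plugged in unbiasedly only for $n_i=1$; so I would instead check the sign of $b_0$. I would then try to show that the leading second-order terms dominate, at least after restricting to $\MISE_q$ to ensure integrability.
\item \emph{Fallback.} If positivity cannot be shown in general, I would state the theorem under the explicit hypothesis $\mathbf Q\succ0$ and note that it is equivalent to the existence of a unique minimiser. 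The stability conditions later in the paper presumably address exactly this point.
\item \emph{Integrability.} A further technical point is that the integrals defining $\mathbf c$ and $\mathbf Q$ must be finite. This should follow from tail decay of $1-F_X^{n_i}$ and from the kernel moment assumptions.
\end{itemize}
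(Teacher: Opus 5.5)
Your block-constant computation is correct. With $h_{ij}=h_i$ the integrated expansion is $C_0+\mathbf c^T\mathbf h+\mathbf h^T\mathbf Q\mathbf h$, its stationary point is $-\tfrac12\mathbf Q^{-1}\mathbf c$, and your Step 3 (strict convexity under $\mathbf Q\succ0$) adds a second-order check that the paper's proof omits and only supplies later through Lemma~\ref{lemma:positive_definite}.

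The gap is the last sentence of the theorem. ``Optimizes all individual bandwidths'' refers to the per-observation bandwidths $h_{ij}$ in the definition of the estimator. It does not mean coordinatewise optimality of the $h_i$, which holds automatically for any global minimiser. Because your Step 1 imposes $h_{ij}=h_i$ from the outset, you set $\bar{h_i^2}=\bar{h_i}^2$. You therefore never test whether letting bandwidths vary within a block could lower the approximate MISE. The paper handles exactly this point:
\begin{itemize}
\item it differentiates the expansion with respect to each $h_{ij}$, using $\partial\bar h_k/\partial h_{ij}=\delta_{ik}/n_i$, $\partial\bar{h_k^2}/\partial h_{ij}=2h_{ij}\delta_{ik}/n_i$ and $\partial\bar{h_k}^2/\partial h_{ij}=2\bar h_i\delta_{ik}/n_i$;
\item it observes that the coefficients of the resulting first-order conditions do not depend on $j$;
\item it concludes that the stationary point is blockwise constant, and only then reduces to the $m\times m$ system $2\mathbf Q\mathbf h=-\mathbf c$.
\end{itemize}

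To close the gap within your convexity framework, set $s_i^2:=\bar{h_i^2}-\bar{h_i}^2=\frac1{n_i}\sum_j(h_{ij}-\bar{h_i})^2\ge0$. The linear terms and the $b_{2b},V_{2b}$ terms involve only the block means $\bar{\mathbf h}=(\bar h_1,\dots,\bar h_m)^T$. The integrated expansion for general $(h_{ij})$ is therefore
\[
m^2\,\MISE=C_0+\mathbf c^T\bar{\mathbf h}+\bar{\mathbf h}^T\mathbf Q\,\bar{\mathbf h}+\sum_{i=1}^m s_i^2A_i+\mathcal O(h^3),\qquad A_i=\int\Big(2\sum_{j=1}^m b_{0,j}\,b_{2a,i}+V_{2a,i}\Big)\,\dd y .
\]
Hence, given $\mathbf Q\succ0$, the block-constant minimiser is a minimiser over all $(h_{ij})$ when every $A_i\ge0$, and it is unique when every $A_i>0$. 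If some $A_i<0$, the quadratic approximation is unbounded below in the within-block spread direction. This is a genuinely separate condition: $\mathbf Q\succ0$ only constrains $b_{2a}+b_{2b}$ and $V_{2a}+V_{2b}$, not the $a$-parts alone. The paper's ``independent of $j$'' argument also tacitly needs $A_i\neq0$. Without this step your proposal proves the formula for $\mathbf h_{opt}$ but not the global-optimality claim.
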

\begin{proof}
See Appendix~\ref{appendix:optimization}.
\end{proof}

Note that the boundaries of all occurring integrals depend on the considered MISE - if the full real line is considered, the integrals are over $\R$, if the $q$-MISE is considered, the integrals are over $[F_X^{-1}(q), \infty)$. For the proofs, we will consider the $q$-MISE for large $q$, as we are primarily interested in the tail behaviour of the estimator. To simplify notation, we will omit the boundary of the integrals in the following, but it should be kept in mind that they are not necessarily over the full real line.

\subsection{Practical Implementation via iterative Plug-in}

The optimal bandwidth vector $\mathbf{h}_{opt}$ derived in Theorem~\ref{thm:optimizer} depends on the functionals $\mathbf{c}$ and $\mathbf{Q}$.
These components involve integrals of the bias and variance coefficients, which in turn depend on the unknown base distribution $F_X$, its density $f_X$, and the derivative $f'_X$.
In practical applications, these quantities are not known a priori, rendering the direct calculation of Equation~\eqref{eq:h_opt} impossible.

To obtain a feasible estimator and reduce sensitivity to the initial pilot distribution, we employ an iterative plug-in approach.
This method treats the optimal bandwidth selection as a fixed-point problem where the bandwidth used to estimate the density functionals must be consistent with the bandwidth derived from optimizing the MISE.

\begin{enumerate}
    \item \textbf{Initialization:}
    We initialize the bandwidth vector $\mathbf{h}^{(0)}$ using a robust rule-of-thumb to ensure a reasonable starting scale. A standard choice is Silverman's rule~\cite{silverman2018density}:
    \begin{equation}
        h^{(0)}_{i} = 1.06 \cdot \hat{\sigma}_i \cdot n_i^{-1/5},
    \end{equation}
    where $\hat{\sigma}_i$ is the sample standard deviation of the $i$-th block.

    \item \textbf{Iterative Update:}
    For each iteration $k = 0, 1, \dots$, we perform the following steps:
    \begin{itemize}
        \item \textbf{Functional Estimation:} Using the current bandwidth estimate $\mathbf{h}^{(k)}$, we construct the pilot density $\hat{f}^{(k)}$ and pilot CDF $\hat{F}^{(k)}$:
        \begin{align}
            \hat{f}^{(k)}(y) &= \frac{1}{m} \sum_{i=1}^m \frac{1}{n_i} \sum_{j=1}^{n_i} \frac{1}{h^{(k)}_{i}} K'\left(\frac{y-X_{ij}}{h^{(k)}_{i}}\right) \\
            \hat{F}^{(k)}(y) &= \frac{1}{m} \sum_{i=1}^m \frac{1}{n_i} \sum_{j=1}^{n_i} K\left(\frac{y-X_{ij}}{h^{(k)}_{i}}\right)
        \end{align}
        These estimates are substituted into the definitions of $\alpha(y)$ and $\beta(y)$ to compute the empirical coefficients $\hat{\mathbf{c}}^{(k)}$ and Hessian matrix $\hat{\mathbf{Q}}^{(k)}$ via numerical integration.

        \item \textbf{Damped Optimization Step:} We calculate the raw optimal bandwidth update $\mathbf{h}^*$ by solving the system derived in Theorem~\ref{thm:optimizer}:
        \begin{equation}
            \mathbf{h}^* = -\frac{1}{2} (\hat{\mathbf{Q}}^{(k)})^{-1} \hat{\mathbf{c}}^{(k)}.
        \end{equation}
        To ensure numerical stability and prevent oscillation, we apply a damped update rule:
        \begin{equation}
            \mathbf{h}^{(k+1)} = (1 - \lambda)\mathbf{h}^{(k)} + \lambda \mathbf{h}^*,
        \end{equation}
        where $\lambda \in (0, 1]$ is a relaxation parameter (typically set to $\lambda \approx 0.5$).
    \end{itemize}

    \item \textbf{Termination:}
    The procedure is repeated until the relative change in bandwidth is small, satisfying $||\mathbf{h}^{(k+1)} - \mathbf{h}^{(k)}||_2 / ||\mathbf{h}^{(k)}||_2 < \epsilon$ for a chosen tolerance $\epsilon$.
\end{enumerate}

The damping parameter $\lambda$ provides a regularization in case the mapping from $\mathbf{h}^{(k)}$ to $\mathbf{h}^*$ is not a contraction.
We will investigate the stability of this iterative scheme numerically in Section~\ref{sec:numerical}.

Theoretical analysis of the stability conditions for this iterative plug-in method via fixed point theorems is a topic for future research.
Experiments again suggest a connection between the sample sizes $m$ and $n_i$.

\subsection{Stability of the Optimal Bandwidth Selection}
In order for the optimal solution $\mathbf{h}_{opt}$ to be stable, the Hessian matrix $\mathbf{Q}$ must be positive definite.

As can be seen in Figure~\ref{fig:stability_known}, this fact is by no means obvious and as we will see depends on a delicate balance between
terms related to the underlying distribution $F_X$, the chosen kernel $K$ as well as $m$ and $n_i$.

We will consider a special case where all $n_i$ are equal, i.e. $n_i=n$ for all $i$. This simplifies the analysis of the Hessian matrix while still providing valuable insights into the stability of the optimal solution. Another reasonable assumption will be that the mean of the kernel is zero, i.e. $\int x K'(x) \mathrm{d}x = 0$, which is true for most common kernels. This essentially gets rid of some implicit bias in the kernel density estimation.

\begin{remark}\label{remark:eigenvalues}
By setting $n_i=n$, the structure of the hessian matrix simplifies to a Toeplitz matrix with constant off diagonal elements, i.e. 
\[\mathbf{Q} = \mathbf{E} b + \mathbf{I} a,\]
where $\mathbf{E}$ is the all ones matrix and $\mathbf{I}$ is the identity matrix. The diagonal elements are $a+b$ and the off diagonal elements are $b$. 

The eigenvalue structure of this matrix is also especially simple. It can be found by observing that 
\[ \mathbf{Q}\mathbf{x} = b \sum_i x_i \mathbf{1} + a \mathbf{x}\]
with $\mathbf{1} = (1,1,\ldots,1)^T$. For $\sum_i x_i = 0$, we have $\mathbf{Q}\mathbf{x} = a \mathbf{x}$, which means that $a$ is an eigenvalue with multiplicity $m-1$. 
For $\mathbf{x} = \mathbf{1}$, we have $\mathbf{Q}\mathbf{1} = a + m b \mathbf{1}$, which means that $a+mb$ is an eigenvalue with multiplicity $1$.
\end{remark}

\begin{lemma}\label{lemma:positive_definite}
Let $n_i=n>1$ for all $i\in[m]$. Then the matrix $\mathbf{Q}$ is positive definite if and only if \[D := \int 2 m b_{0}(y) b_{2}(y) + V_{2}(y)\mathrm{d}y > 0\]
\end{lemma}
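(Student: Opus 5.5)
With $n_i=n$ for every block, the distribution of each block is the same, so the per-block coefficients from Theorem~\ref{thm:asymptotic_expansion} do not depend on $i$. I will write $b_{s,i}=b_s$ and $V_{s,i}=V_s$, and then read $\mathbf{Q}$ off the formula in Theorem~\ref{thm:optimizer}. The sum $\sum_{j=1}^m b_{0,j}$ collapses to $m b_0$, which gives
\[
Q_{kl} = \int b_1(y)^2\,\mathrm{d}y + \delta_{kl}\int \big(2m\, b_0(y) b_2(y) + V_2(y)\big)\,\mathrm{d}y .
\]
So $\mathbf{Q}$ has exactly the form $\mathbf{E}b + \mathbf{I}a$ of Remark~\ref{remark:eigenvalues}, with
\[
b = \int b_1^2\,\mathrm{d}y \ge 0, \qquad a = D .
\]
By the remark, the eigenvalues of $\mathbf{Q}$ are $a$ (with multiplicity $m-1$) and $a+mb$. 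Since $\mathbf{Q}$ is symmetric, it is positive definite if and only if all its eigenvalues are positive.

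The sufficiency direction is immediate. If $D>0$, then $a>0$, and $a+mb\ge a>0$ because $b\ge 0$. Hence every eigenvalue is positive.

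For necessity I would use the standing assumption that the kernel has mean zero, $\int xK'(x)\,\mathrm{d}x=0$.

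1. Go back to the appendix derivation of the bias and variance expansion. There the first-order coefficient $b_{1}$ should come from expanding $\E K((y-X)/h)$ to first order in $h$.
2. That first-order term is $-h f_X(y)\int xK'(x)\,\mathrm{d}x$, so $\E F_{i,h}(y) = F_X(y) + \Ocal(h^2)$.
3. Propagating this through the $n$-th power and the expectation of $F_{i,h}^n$, the $\bar h_i$-coefficient $b_{1}$ is proportional to the first kernel moment, so it vanishes identically. Hence $b=0$ and $\mathbf{Q}=D\,\mathbf{I}$, and positive definiteness is then trivially equivalent to $D>0$, for every $m$.

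The condition $n>1$ is presumably needed so that the coefficients $V_2$ and $b_2$ have the general form used in the appendix, and so that the first-order variance terms are the genuine ones. I would check this when recalling the exact expressions.

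The main obstacle is confirming from the exact appendix expressions that $b_1$ really vanishes. The cross terms in $\E F_{i,h}^n$ come from the covariances between the kernel terms $K((y-X_{ij})/h)$ within a block. Some of these could contribute at order $h$ even when the kernel has mean zero; for example, a term $\int K(x)(1-K(x))\,\mathrm{d}x$ appears at order $h$ in the variance of a CDF-kernel estimator.

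If $b_1$ turns out not to vanish, I would fall back on the eigenvalue argument alone.
- For $m\ge 2$, the eigenvalue $a$ actually occurs, so positive definiteness forces $a=D>0$. Combined with sufficiency, this gives the equivalence.
- For $m=1$, $\mathbf{Q}$ is the scalar $a+b$. The statement then needs $b=0$, so in that case I would fall back on the kernel-symmetry argument above to establish $b=0$.
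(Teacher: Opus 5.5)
Your reduction to $\mathbf{Q}=b\mathbf{E}+a\mathbf{I}$ with $b=\int b_1^2\ge 0$ and $a=D$ matches the paper. So do the sufficiency direction and your fallback for $m\ge 2$: there the eigenvalue $a$ actually occurs, so positive definiteness forces $D>0$. That fallback is exactly the paper's proof.

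The genuine gap is your primary necessity argument, steps 1--3: $b_1$ does \emph{not} vanish for a mean-zero kernel. The order-$h$ contribution does not come from covariances between kernel terms, since the $X_{ij}$ are independent. It comes from the variance of a single kernel term,
$\sigma^2_{i,h}=\frac1n\bigl(F_X(1-F_X)-\bar h_i\alpha+\Ocal(h^2)\bigr)$.
This enters $\E[F_{i,h}^n]$ through the even central-moment terms of the binomial expansion around $\E[F_{i,h}]$.

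With $\mu_{K,1}=0$ one has $\alpha=f_X\mu_{K^2,1}$, and $\mu_{K^2,1}=\mu_{K,1}+\int K(1-K)\,\dd u=\int K(1-K)\,\dd u>0$. This is precisely the term you suspected. Hence $b_1=FY_{1,n}$, whose $k=1$ summand is $-\frac{n-1}{2}F_X^{n-2}\alpha$ and whose remaining summands all have the same sign. So $b_1\neq 0$ wherever $f_X>0$ and $F_X>0$, and $\int b_1^2>0$ as soon as $n\ge 2$. Only for $n=1$, which the hypothesis excludes, is that sum empty and $b_1=-f_X\mu_{K,1}=0$.

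The paper's explicit large-$n$ formula for $b_{1,i}$ and the constant $C_V\propto\mu_{K^2,1}^2$ in the stability appendix both rely on exactly this. So $\mathbf{Q}\neq D\mathbf{I}$, and steps 1--3 have to be dropped.

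For $m\ge 2$ this costs nothing, because your fallback is a complete proof. For $m=1$, however, your plan rested on $b=0$, and nothing can replace it. Then $\mathbf{Q}$ is the scalar $D+\int b_1^2$ with $\int b_1^2>0$, so positive definiteness is equivalent to $D>-\int b_1^2$. That is strictly weaker than $D>0$, and the stated equivalence is not valid in general.

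You were right to single out $m=1$, but the correct resolution is to read the lemma with $m\ge 2$, not to invoke kernel symmetry. This is also what the paper's proof tacitly does when it uses the eigenvalue $a$ of multiplicity $m-1$.
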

Note that we omitted the subscripts $k$ and $l$ in the above expression for $D$ since
all the functions $b_{0,k}$, $b_{2,k}$, and $V_{2,k}$ are the same for all $k$.
\begin{proof}
The matrix $\mathbf{Q}$ is positive definite if and only if all its eigenvalues are positive.
From the previous remark, we know that the eigenvalues of $\mathbf{Q}$ are $a$ and $a+mb$.
Since $b = \int b_{1}(y)^2 \mathrm{d}y \geq 0$, we know that $mb \geq 0$, thus $a+mb \geq a$,
so it suffices to show that the smallest eigenvalue $a > 0$. Since $a \propto D$, this is the case if and only if $D > 0$.
\end{proof}

The stability of the optimal bandwidth solution is determined as follows:

\begin{theorem}[Asymptotic Stability of the Optimal Bandwidth]\label{theorem:stability}
Let $n_i = n > 1$ for all $i \in \{1, ..., m\}$, assume a kernel $K$ with zero mean ($\mu_{K,1} = \int uK'(u)du = 0$), and consider the asymptotic regime where $n \to \infty$.
The positive definiteness of the Hessian matrix $Q$, required for a stable \ac{mise} minimum, depends on the tail behavior of the base distribution $F_X$ categorized by its extreme value index $\gamma$ with $\gamma > -\frac12$.
For the optimal bandwidth solution to be stable, we need
\begin{align*}
    m < C(\gamma, F_X, K) \cdot n^{1+\frac{\gamma}{2}},
\end{align*}
where $C(\gamma, F_X, K)$ is a positive constant that depends on the extreme value index $\gamma$, the base distribution $F_X$, and the kernel $K$ as derived in Appendix~\ref{appendix:stability}.
The optimization is therefore asymptotically stable, provided the number of blocks does not grow faster than $\mathcal{O}(n^{1+\frac{\gamma}{2}})$.
\end{theorem}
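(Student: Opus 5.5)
By Lemma~\ref{lemma:positive_definite}, stability is equivalent to
\[
D = 2m\int b_0(y)b_2(y)\,\dd y + \int V_2(y)\,\dd y > 0,
\]
with the integrals taken over the $q$-tail $[F_X^{-1}(q),\infty)$ for large $q$. The plan is therefore to find the large-$n$ sizes and signs of $I_b := \int b_0 b_2$ and $I_V := \int V_2$. The condition will then read $m < I_V/(2|I_b|)$ whenever $I_b<0$ and $I_V>0$, and the task reduces to showing $I_V/|I_b| \asymp n^{1+\gamma/2}$.

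\textbf{Step 1: coefficients.} I would take the large-$n$ forms of the coefficients from Appendix~\ref{appendix:asymptotic}. Write $F_{i,h}(y) = F_X(y) + \Delta$, with $\E\Delta = \tfrac{h^2}{2}\mu_{K,2} f_X'(y) + \Ocal(h^3)$; the linear term vanishes because $\mu_{K,1}=0$. The variance is $\Var(\Delta) = \tfrac1n\left[F_X(1-F_X) - h f_X\,\psi_K + \Ocal(h^2)\right]$, where $\psi_K = 2\int uK(u)K'(u)\,\dd u$ comes from the $K^2$ moment.

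Raising to the power $n$ by the delta method / log-expansion gives, with $G := F_X^n$:
\[
b_0 \approx G\left(\ee^{(1-F_X)/(2F_X)\cdot(n-1)/n\cdots}-1\right) \approx G\,\frac{(n-1)(1-F_X)}{2F_X},
\qquad
V_0 \approx n G^2 \frac{1-F_X}{F_X}.
\]
The $h^2$ terms are
\[
b_2 \approx G\,\frac{n\,\mu_{K,2} f_X'}{2F_X} + (\text{lower-order terms}),
\qquad
V_2 \approx n G^2\,\frac{c_K f_X^2}{F_X^2} + \cdots,
\]
and I would keep only the leading power of $n$ in each. The sign of $b_2$ in the tail is governed by $f_X'<0$, so $b_0 b_2<0$ there. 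The leading term of $V_2$ is positive (the $h^2$ variance correction for the CDF kernel), so $D>0$ requires an upper bound on $m$.

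\textbf{Step 2: tail integrals.} Substitute $y = U(n t)$, where $U$ is the tail quantile function. Then $1-F_X \approx 1/(nt)$ on the relevant range, $G \approx \ee^{-1/t}$, and by the von Mises condition $f_X(y) \approx \frac{1-F_X(y)}{a(n)\,t^{\gamma}}$ with auxiliary scale $a(n)$ regularly varying of index $\gamma$. Likewise $f_X' \approx -(1+\gamma) f_X^2/(1-F_X)$. This turns each integrand into $n^{\text{power}}\, a(n)^{\text{power}}$ times a $\gamma$-dependent integral in $t$; convergence of those $t$-integrals is where $\gamma>-\tfrac12$ is needed. Collecting the powers gives
\[
I_V \asymp n\, a(n)^{-1}\, C_V(\gamma,K),
\qquad
I_b \asymp n^{c}\, a(n)^{-1}\, C_b(\gamma,K).
\]
The ratio $I_V/|I_b|$ then carries the residual power $n^{1+\gamma/2}$ after the scale factors combine, and $C(\gamma,F_X,K) = C_V/(2C_b)$ absorbs the slowly varying parts.

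\textbf{Main obstacle.} The hard part is Step 2: extracting the correct leading powers of $n$ from $b_0$, $b_2$ and $V_2$, where cancellations between the $b_{2a}$ and $b_{2b}$ contributions may occur, and then making the regular-variation substitution rigorous uniformly on $[F_X^{-1}(q),\infty)$ so that the exponent comes out as $1+\gamma/2$. I would first verify the exponent on the three prototypes — Pareto ($\gamma>0$), exponential ($\gamma=0$) and uniform-type ($-\tfrac12<\gamma<0$) — by explicit computation. I would then handle the general case using Potter bounds and dominated convergence for the $t$-integrals.
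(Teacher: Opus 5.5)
\textbf{There is a genuine gap: the exponent $1+\gamma/2$ is asserted in Step~2, not derived, and your setup cannot produce it.}

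Your reduction to the sign of $D=2mI_b+I_V$ via Lemma~\ref{lemma:positive_definite} is the same as the paper's. The problem is the single window your substitution probes, $1-F_X=1/(nt)$ with $G\approx\ee^{-1/t}$. There:
\begin{itemize}
\item $b_0$ is an $n$-free function of $t$;
\item the $h^2$-coefficients $b_2$ and $V_2$ are at most of size $n|f_X'|\asymp n^2f_X^2\asymp a(n)^{-2}$ times $n$-free functions of $t$;
\item $\dd y\approx a(n)t^{\gamma-1}\,\dd t$.
\end{itemize}
Hence $I_b$ and $I_V$ both scale like $a(n)^{-1}$, and $I_V/|I_b|=\Ocal(1)$. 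Your own truncated $V_2\approx nG^2c_Kf_X^2/F_X^2$ makes it worse: it gives $I_V\asymp (n\,a(n))^{-1}$, not $n\,a(n)^{-1}$, so the ratio is $\Ocal(n^{-1})$. Since $a$ is regularly varying of index $\gamma$, no combination of $n$ and $a(n)$ from a window at level $1-F_X\asymp 1/n$ yields $n^{\gamma/2}$. A half index requires a window where $1-F_X\asymp n^{-1/2}$.

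Step~1 has a related flaw. For $z=n(1-F_X)=\Ocal(1)$, the $k$-th term of the binomial expansion is $\approx F_X^n z^k/(2^k k!)$. All orders are therefore of the same size, and ``keeping the leading power of $n$'' is not a valid truncation. In particular, the $\beta$- and $\alpha^2$-contributions to $b_2$ are of the same order as $\tfrac n2\mu_{K,2}f_X'$, not lower order. So the sign of $b_0b_2$ is not simply that of $f_X'$. The paper notes that the sign of $C_b$ depends on $\kappa$ and $\gamma$, and checks it only for specific kernels.

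\textbf{The missing idea is the paper's two-scale mechanism.} Instead of truncating, the paper replaces the central moments of $F_{i,h}(y)$ by Gaussian ones (Lemma~\ref{lemma:central_moments}) and resums the whole series into $\rho(c,N)$ and its $c$-derivatives. With $t=1-F_X$:
\begin{itemize}
\item the bias coefficients carry $F_X^n\rho(c,n)\approx\ee^{-nt/2}$;
\item the variance coefficients carry $F_X^{2n}\rho(c,2n)\approx\sqrt{(1-t)/(1+t)}\,\ee^{-2nt^2/(1+t)}$, which decays only on the scale $t\asymp n^{-1/2}$.
\end{itemize}
With $f_X\approx Dt^{\gamma+1}$, ordinary Watson's lemma gives $I_b\asymp -C_bn^{-\gamma}$. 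This is where $\gamma>-\tfrac12$ enters, through the factor $t^{\gamma-1/2}$. The modified Watson lemma (Lemma~\ref{lemma:mod_watson}), applied to the $\ee^{-2nt^2/(1+t)}$ terms of $V_2$ (led by the $\alpha^2$-part), gives $I_V\asymp C_Vn^{1-\gamma/2}$ with $C_V\propto\mu_{K^2,1}^2\Gamma(\gamma/2+1)$. The exponent $1+\gamma/2$ is exactly the mismatch of these two scales. To reach the stated bound you need these resummed coefficients; a level-$n$ tail-quantile substitution alone will not do it.

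Be aware, though, that the wide window is created by the Gaussian resummation itself. For the actual law, $0\le F_{i,h}\le 1$ gives $\E F_{i,h}^{2n}\le\E\,\ee^{-2n(1-F_{i,h})}\le\ee^{-(1-\ee^{-1})nt}$ for a symmetric kernel, which decays on the $1/n$ scale. A rigorous version of your route is therefore more likely to end with an $\Ocal(1)$ ratio than to confirm $n^{1+\gamma/2}$.
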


\begin{proof}
The proof follows from Lemma~\ref{lemma:positive_definite}, which establishes that the positive definiteness of $\mathbf{Q}$ is equivalent to the condition $D > 0$.
The asymptotic analysis in Appendix~\ref{appendix:stability} derives the leading-order behavior of the integral $D$ for large $n$.
\end{proof}

This is a significant result. It shows that the optimization success hinges critically on the tail behaviour of the underlying distribution.
The Gumbel case is only partially covered by the above analysis, because of the special asymptotic representation of the distributions
used in the proof. The result will, however, be on the same order up to logarithmic corrections.

Up to this point, the case where all $n_i$ were equal has been considered. However, in practical applications, this is often not the case.
The result above critically hinged on the assumption of equal block sizes, due to the Toeplitz structure of the matrix $Q$ that arises in that case.
For the more general setting, perturbation theory can be employed to analyze the impact of unequal block sizes on the optimization procedure.
This can be seen by setting $\bar{n} = \frac{1}{m} \sum_i n_i$ and observing the identity
\begin{align*}
n_i &= \bar{n} + (n_i - \bar{n})\\
    &= \bar{n} \left(1+\frac{n_i-\bar{n}}{\bar{n}}\right)\\
    &=: \bar{n} (1+\epsilon_i),
\end{align*}
where $\epsilon_i$ is a small perturbation if the $n_i$ do not deviate too much from their mean.
However, the details of this analysis are beyond the scope of the current work and will be addressed in future research,
we only sketch a brief outline of a possible result.

\begin{conjecture}[Conjectured Stability for Unequal Block Sizes]\label{conjecture:unequal_block_sizes}
Let the conditions of Theorem~\ref{theorem:stability} hold, but now for block sizes $n_i$ that are not necessarily equal. Let $\bar{n} = \frac{1}{m}\sum_i n_i$ be the mean block size and $\sigma_n^2 = \frac{1}{m}\sum_i (n_i - \bar{n})^2$ be the variance of the block sizes.
If the relative standard deviation of the block sizes is small, i.e., $\sigma_n / \bar{n} \ll 1$, then the stability condition of Theorem~\ref{theorem:stability} holds approximately with $n$ replaced by $\bar{n}$.
Specifically, if $m < C(\gamma, F_X, K) \cdot \bar{n}^{1+\frac{\gamma}{2}}$ holds with a sufficient margin, the Hessian matrix $\mathbf{Q}$ remains positive definite.
\end{conjecture}
\begin{proof}[Proof Sketch]
The Hessian matrix $\mathbf{Q}$ for unequal block sizes $n_i$ can be seen as a perturbation of the idealized Hessian $\mathbf{Q}_0$ corresponding to equal block sizes $\bar{n}$.
We can write $\mathbf{Q} = \mathbf{Q}_0 + \delta\mathbf{Q}$, where $\mathbf{Q}_0$ is the matrix from the proof of Lemma~\ref{lemma:positive_definite} evaluated at $n=\bar{n}$.
The validity of this decomposition can for example be seen by performing a Taylor series expansion of the $b$ and $V$ terms in the neighborhood of $\bar{n}$.
The exact computations are very lengthy and will not be carried out here.
However, the entries of the perturbation matrix $\delta\mathbf{Q}$ depend to leading order on the deviations $\epsilon_i = (n_i - \bar{n})/\bar{n}$.
For small $\epsilon_i$, the entries of $\delta\mathbf{Q}$ are approximately linear in $\epsilon_i$.

From the theory of matrix perturbations (e.g., Weyl's inequality), the eigenvalues of $\mathbf{Q}$ are close to the eigenvalues of $\mathbf{Q}_0$.
Let $\lambda_{\min}(\mathbf{Q})$ denote the smallest eigenvalue of the matrix $\mathbf{Q}$. Then we have
\[ \lambda_{\min}(\mathbf{Q}) \ge \lambda_{\min}(\mathbf{Q}_0) - ||\delta\mathbf{Q}||_2, \]
where $||\cdot||_2$ is the spectral norm.
From the analysis for equal block sizes, we know that $\lambda_{\min}(\mathbf{Q}_0) \propto D(\bar n)$, where $D(\bar n) > 0$ is the stability condition derived in Lemma~\ref{lemma:positive_definite} with $n=\bar{n}$.
The norm of the perturbation matrix can be bounded by the magnitude of the deviations, $||\delta\mathbf{Q}||_2 \le K \cdot \max_i|\epsilon_i|$ for some constant $K$ that depends on the derivatives of the matrix entries with respect to $n$.
Therefore, $\mathbf{Q}$ remains positive definite if $D(\bar{n}) > ||\delta\mathbf{Q}||_2$.
This condition is satisfied if the stability condition for the mean block size $\bar{n}$ holds with a margin large enough to absorb the perturbation, which is guaranteed if the variance of the block sizes is sufficiently small.
\end{proof}

%
%

\section{Data transformations}\label{sec:transforms}

\subsection{When kernel methods need help}

The bandwidth selector of Theorem~\ref{thm:optimizer} is optimal in the sense of minimizing the MISE, but this does not necessarily mean that the resulting estimator will perform well in practice.  In particular, the optimal bandwidth might be very small, leading to an estimator that is essentially a staircase in the tail and thus fails to capture the underlying distribution's true tail behaviour.  This can happen when the underlying distribution has for example a sharp spike near zero, which is common in many real-world distributions. If still a heavy tail is present, the optimal bandwidth will be small to capture the sharp spike, but this will lead to a poor fit in the tail. Resorting to a larger quantile $q$ for the $q$-MISE might solve this issue, but is typically not feasible due to data limitations.
The standard remedy in the kernel density estimation literature is to fit on a transformed scale where the density is better conditioned, and back-transform the result.
This idea is developed in detail by Wand, Marron, and Ruppert \cite{wand1991transformations}, who propose a shifted-power (Box--Cox) family of transformations and an empirical procedure for selecting the transformation parameter.
Silverman's monograph \cite{silverman2018density} discusses the same idea under the heading of ``variable bandwidth methods''.
Sheather's review \cite{sheather2004density} and Park and Marron \cite{park1990comparison} cover the bandwidth-selection complications that accompany transformation-based estimation.
We adopt the same principle for DDEVD.
Note that this section is much more qualitative than the previous sections, as its main motivation is to justify the use of transformations in practice, rather than to provide a rigorous theoretical analysis of their properties. The latter is an interesting topic for future research, but is again beyond the scope of the current work.

\subsection{The transformation framework}

Let $\mathcal{Y}\subset\R$ and $\mathcal{Z}\subset\R$ be two subsets of the real line, and let $T: \mathcal{Y} \to \mathcal{Z}$ be a strictly increasing $\mathcal{C}^2(\mathcal{Y}, \mathcal{Z})$ map and $Z = T(Y)$ be the transformed variable.
Let $F_Y$ and $F_Z$ be the CDFs of $Y$ and $Z$, respectively; by the change-of-variables identity, $F_Y(y) = F_Z(T(y))$.
Further, we denote by $\hat F_Z$ the DDEVD estimator fitted on the transformed $Z$-data.

\begin{definition}[Transformation-based DDEVD]
The \emph{back-transformed DDEVD estimator} $\hat F_Y^T$ is defined as the composition of the DDEVD estimator fitted on the transformed data with the transformation $T$:
\begin{equation}\label{eq:transform_cdf}
\hat F_Y^T(y) \;:=\; (\hat F_Z \circ T)(y) \;=\; \hat F_Z(T(y)).
\end{equation}
\end{definition}

The back-transformed estimator is a CDF estimator on $\mathcal{Y}$ targeting the same target distribution $F_Y$ as the direct fit $\hat F_Y^{\mathrm{id}}$ (the case $T = \mathrm{id}$), preserving the DDEVD structure of Section~\ref{sec:main_results}.
Concretely, fitting DDEVD on $z_{ij} = T(y_{ij})$ produces the $m$-average of $n_i$-th powers of block kernel CDFs on the $z$-data, so evaluating this estimator at $T(y)$ delivers $\hat F_Y^T(y)$.
The qualitative content of the transformation, established in the KDE literature, is that the back-transformed estimator behaves like a y-scale kernel CDF with a \emph{locally varying} bandwidth $h_Y(y) \propto 1/|T'(y)|$.
For the log transform on positive support, $T'(y) = 1/y$ gives $h_Y(y) \propto y$ resulting in a narrow bandwidth near zero (where the data is dense) and a wide bandwidth in the tail (where it is sparse).
This is precisely the local adaptation that the single-bandwidth selector cannot supply.
Also note, that in case of an affine transformation $T(y) = s y$ with $s > 0$, the back-transformed estimator is just a rescaled version of the original estimator, i.e., $\hat F_Y^T(y) = \hat F_Z(s y)$, and the optimal bandwidth vector $\mathbf{h}_{opt}$ for the transformed data is just a rescaled version of the optimal bandwidth vector for the original data, i.e., $h_{opt, s} = s h_{opt}$, as has alreadyh been mentioned in Section~\ref{sec:main_results}.

\subsection{Why transformation helps: a qualitative argument}

We borrow the qualitative answer to ``when does transformation help?'' from the kernel-density estimation literature, with the caveat that the corresponding DDEVD analysis is more involved (see the remark at the end of this subsection). Following \cite{wand1991transformations}, the AMISE of a kernel density estimator scales with the integrated squared density curvature $R(f) := \int (f'')^2$; smaller $R(f)$ means easier estimation.
Transformation helps if $R(f_Z) < R(f_Y)$, i.e.\ if the transformed density is smoother in this sense than the original.
For kernel CDF estimation the analogous AMISE roughness functional is $\int (f')^2$ rather than $\int (f'')^2$ \cite{azzalini1981note, polansky2000multistage}; the qualitative picture below is unchanged, only the precise functional that the transformation reduces.
A sharp bulk near zero alone is sufficient to make $R(f_Y)$ very large (or, in pathological cases, unbounded), and any transformation that maps the bulk to a moderate-density region of the transformed scale dramatically reduces $R(f_Z)$.
A heavy upper tail compounds the problem along a separate axis: the global bandwidth selector is forced toward the bulk's feature scale, leaving the sparse tail under-smoothed.
This is the regime where transformation provides a genuine benefit; the principle is independent of the specific kernel-CDF / DDEVD setting.

When $f_Y$ has a near-singular density at $y \to 0^+$, the no-transform fit fails the standard diagnostic checks (Section~\ref{subsec:diagnostics}), and the canonical log transform maps the singular region to $z \to -\infty$ on a well-behaved log-normal-like density.
The transformation rescues the estimator's behaviour in the tail.
Symmetric thin-tailed data without boundary structure does not benefit from transformation \cite{wand1991transformations, sheather2004density}; the recipe below applies only when the diagnostic checks fail on the original scale.

The computation of the AMISE for the DDEVD estimator is more involved than for the KDE, and the transformation's effect on the DDEVD MISE is not as straightforward as in the KDE case.
A rigorous analysis of the transformation's effect on the DDEVD MISE and the AMISE is an interesting topic for future research, but is yet again beyond the scope of the current work.

\subsection{Diagnostics: when to reach for the transformation}
\label{subsec:diagnostics}

We do not know $f_Y$ a priori, the case for transformation is made empirically through three observable diagnostic checks on the y-scale fit.

\paragraph{Iteration-convergence diagnostic.}
The iterative plug-in either converges to a finite bandwidth or fails (negative bandwidths, oscillation, $\mathbf{h}^{(k)} \to 0$).
Failure indicates the Hessian $\mathbf{Q}$ is not positive-definite at the converged point.

\paragraph{Bandwidth-vs-data-spacing diagnostic.}
The converged $h_Y^\star$ should be at least the order of the median spacing between consecutive q-tail observations.
A bandwidth below this threshold cannot smooth between adjacent tail points.
The resulting kernel CDF will be a staircase in the q-tail.

\paragraph{Density-spikiness diagnostic.}
The estimated density $\hat f_X = \hat F_X'$ should be smooth in the q-tail.
Visible multimodality with one local maximum per observation indicates the bandwidth is below the local feature scale.

When these diagnostics pass, the no-transform fit is accepted.
When one or more fail, the transformation remedy is justified.
The same diagnostics applied on the transformed scale tell us whether the transformation worked.

\subsection{Practical recipe}\label{subsec:recipe}

For DDEVD with the iterative plug-in:

\begin{enumerate}
\item Fit on the y-scale data using the standard plug-in.
\item Apply the three diagnostic checks of Section~\ref{subsec:diagnostics}.
  If all pass, accept the no-transform fit.
\item If any diagnostic fails, identify a transformation $T$ from the data's boundary structure. Typical candidates include $\log$ or Box-Cox.
\item Refit on the transformed data, apply the same diagnostics on the transformed fit, and accept the back-transformed estimator $\hat F_Y^T = \hat F_Z \circ T$ when the transformed diagnostics pass.
\item Return levels are computed on the working scale and back-transformed: $\hat z_T = \hat F_Z^{-1}(1 - 1/T_{\rm ret})$, $\hat y_{T_{\rm ret}} = T^{-1}(\hat z_T)$.
\end{enumerate}

\subsection{Outlook}\label{subsec:outlook}

The qualitative argument above leans on the kernel-density-estimation literature for two propositions: that transformation helps when the working-scale density has lower roughness $R(f_Z) < R(f_Y)$, and that this rough inequality is decidable from data through post-fit diagnostics.
Both are operational rather than theoretical statements in our setting.
Making them quantitative for the specific DDEVD $q$-MISE expansion of Theorem~\ref{thm:asymptotic_expansion} is a worthwhile follow-up.
Three concrete directions:

The leading-order q-MISE expansion of Theorem~\ref{thm:asymptotic_expansion} can be evaluated on the transformed scale, yielding a Jacobian-weighted analogue of the y-scale constant.
Comparing the two constants for canonical $(F_X, T)$ pairs would give the DDEVD analogue of \cite{wand1991transformations}'s analytical comparison for KDE.

A data-driven transformation selector within the Box--Cox family, analogous to the KDE selectors of \cite{wand1991transformations, park1990comparison}, would replace the diagnostic-and-default workflow of Section~\ref{subsec:recipe} with an optimisation of the transformation parameter against an estimated q-MISE constant.

The conditioning failure modes diagnosed in Section~\ref{subsec:diagnostics} are likely traceable to boundary-regularity conditions on $f_X$ that the proof of Theorem~\ref{theorem:stability} implicitly assumes.
Closing this gap would give a formal characterisation of which $F_X$ require transformation and which do not, complementing the empirical diagnostics with a theoretical prediction.

These directions extend the present empirical justification into a full theoretical treatment.
For the scope of this paper, the diagnostic-and-default workflow combined with the KDE-literature precedent is sufficient to justify the transformation remedy in practice, an applied case study (sub-daily rainfall) is
deferred to a companion paper.

\section{Numerical Experiments}\label{sec:numerical}
To validate the theoretical findings, we conducted numerical experiments to assess the performance of the \ac{ddevd} estimator and the stability of the optimal bandwidth selection procedure.\footnote{The code to reproduce all experiments is available at \url{https://github.com/DataLabHell/ddevd}. This repository also includes a package that implements the DDEVD estimator and the optimal bandwidth selection procedure, which can be used for further experimentation and application.}

\subsection{Synthetic Data Experiments}

By assuming that the base distribution $F_X$ is known, we can generate synthetic datasets to evaluate the estimator's performance under varying conditions.
On one hand we can control the tail behaviour of the base distribution by selecting different distributions with known extreme value indices $\gamma$.
On the other hand, we can vary the number of blocks $m$ and the number of samples per block $n_i$ to observe their influence on the stability of the bandwidth optimization.

In addition, by pretending we do not know $F_X$, we can check the accuracy of the plug-in estimates for the unknown quantities in the optimal bandwidth formula.

\subsubsection{Known Base Distribution - Stability Validation}

This is arguably the simplest setting to validate the theoretical results. We generate $m$ blocks of $n$ i.i.d. samples from a known base distribution $F_X$.
For four different base distributions, we vary $m$ and $n$ systematically and check whether the bandwidth optimization procedure converges stably.
We consider the following base distributions:
\begin{itemize}
    \item Standard Normal Distribution ($\gamma = 0$)
    \item Exponential Distribution ($\gamma = 0$)
    \item Pareto Distribution with shape parameter $\alpha = 2$ ($\gamma = 0.5$)
    \item Cauchy Distribution ($\gamma = 1$)
\end{itemize}
For each combination of $m$ and $n$, we run the bandwidth optimization procedure and record whether it converges to a stable solution.
The results are summarized in Figure~\ref{fig:stability_known}.

\begin{figure}[ht]
    \centering
    \includegraphics[width=0.85\textwidth]{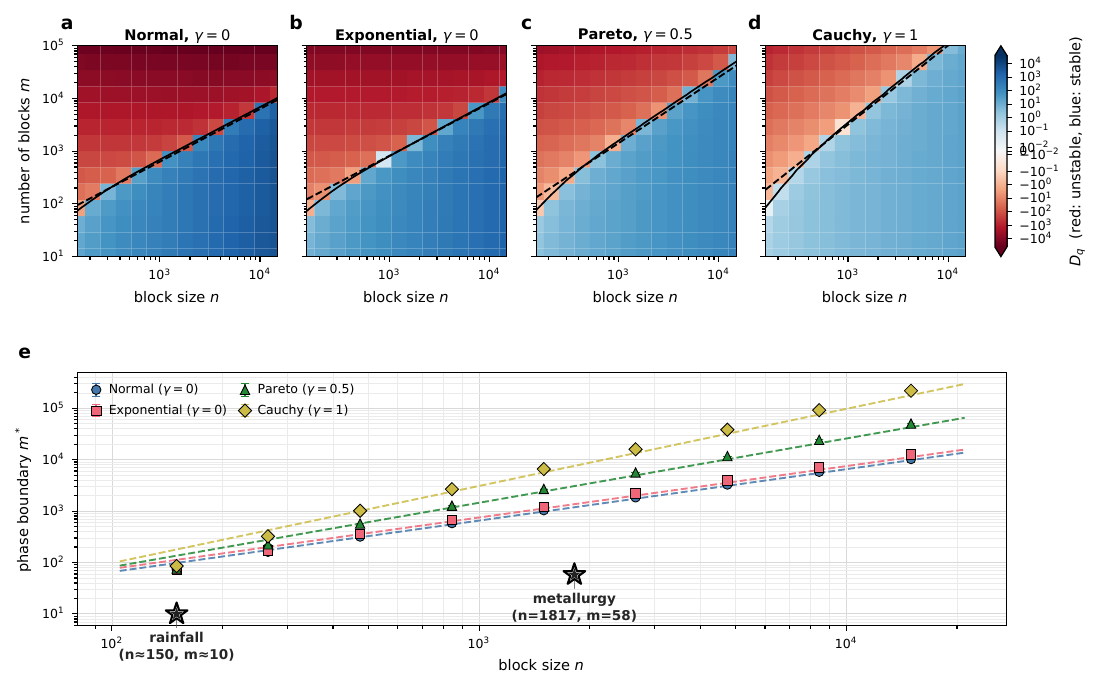}
    \caption{
        Stability phase diagram for the DDEVD bandwidth optimisation for different distributions.
        Each point in the $(n, m)$ plane is coloured according to whether the bandwidth optimisation converges stably (blue) or diverges (red), evaluated numerically for a representative base distribution.
        The theoretical stability boundary $m = C \cdot n^{1+\gamma/2}$ (dashed line) separates the two regimes.
        The figure parts a, b, c and d show the Normal, Exponential, Pareto and Cauchy distribution, respectively.
        Subfigure e shows all evaluated distributions and their transition points together with the theoretical stability boundaries.
    }
    \label{fig:stability_known}
\end{figure}

\subsubsection{Unknown Base Distribution - Stability with Plug-in Estimator Evaluation}

In a more realistic scenario, we assume that the base distribution $F_X$ is unknown.
We generate synthetic datasets as before, but this time we estimate the unknown quantities in the optimal bandwidth formula using plug-in estimators based on the observed data.
We then compute the \ac{ddevd} estimator with the plug-in optimal bandwidth and evaluate its \ac{mise} as before.
The stability of the bandwidth optimization is in a similar order as in the known base distribution case (see Figure~\ref{fig:stability_unknown}), albeit with a slightly higher variance due to the estimation of the unknown quantities.
Further, the experiments need to be done at lower values of $m$ and $n$ due to the increased computational cost of the plug-in estimation procedure.
The \ac{mise} of the plug-in DDEVD estimator is compared against the \ac{mise} obtained with the analytically optimal bandwidth in Figure~\ref{fig:mise_unknown}.

\begin{figure}[ht]
    \centering
    \includegraphics[width=0.85\textwidth]{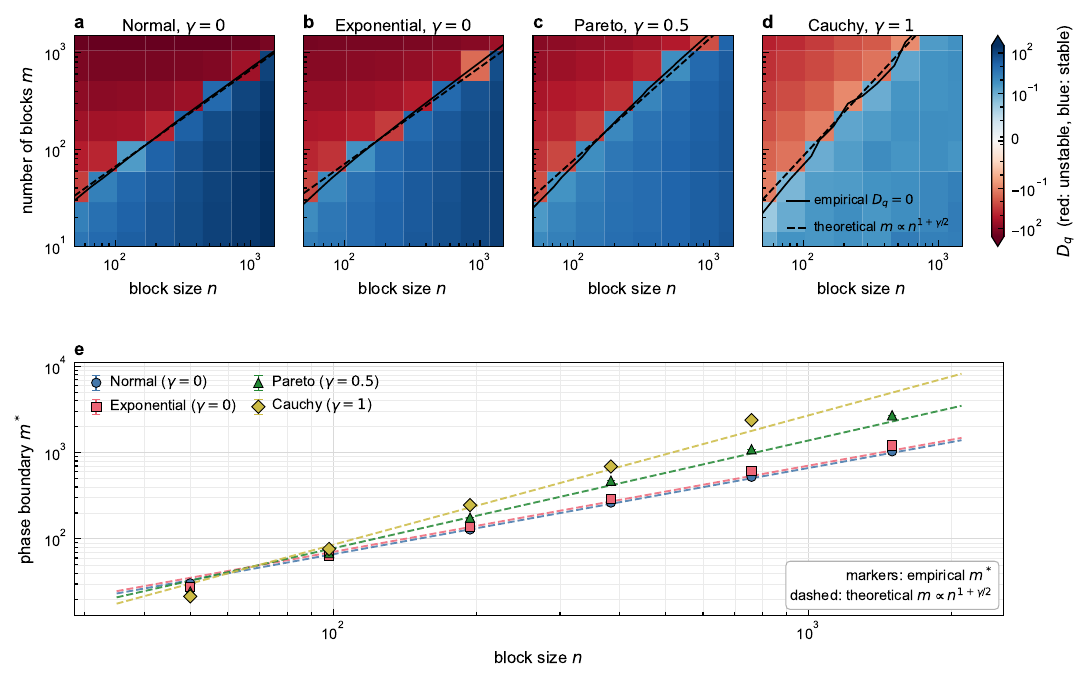}
    \caption{
        Stability phase diagram for the DDEVD bandwidth optimisation for different distributions with unknown base distribution.
        Each point in the $(n, m)$ plane is coloured according to whether the bandwidth optimisation converges stably (blue) or diverges (red), evaluated numerically for a representative base distribution.
        The theoretical stability boundary $m = C \cdot n^{1+\gamma/2}$ (dashed line) separates the two regimes.
        The figure parts a, b, c and d show the Normal, Exponential, Pareto and Cauchy distribution, respectively.
        Subfigure e shows all evaluated distributions and their transition points together with the theoretical stability boundaries.
    }
    \label{fig:stability_unknown}
\end{figure}

\begin{figure}[ht]
    \centering
    \includegraphics[width=0.85\textwidth]{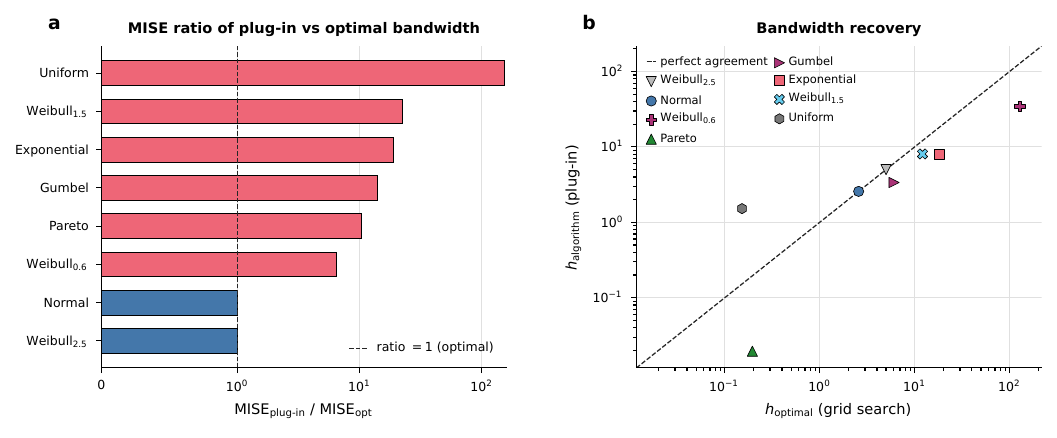}
    \caption{
        \ac{mise} of the DDEVD estimator when the base distribution $F_X$ is unknown and its functionals are replaced by plug-in estimates inside the bandwidth optimiser.
        The achieved \ac{mise} is contrasted with the \ac{mise} of the same estimator using the analytically optimal bandwidth, quantifying the cost of not knowing $F_X$ a priori.
    }
    \label{fig:mise_unknown}
\end{figure}

\subsection{Varying Block Sizes - Stability Evaluation}

In practical applications, the number of samples per block $n_i$ may vary across blocks.
As conjectured in Conjecture~\ref{conjecture:unequal_block_sizes}, we investigate the stability of the bandwidth optimization procedure under varying block sizes.
We again generate synthetic datasets, this time with varying $n_i$, and assess the stability of the bandwidth selection.
The results are summarized in Figure~\ref{fig:stability_varying}.

\begin{figure}[ht]
    \centering
    \includegraphics[width=0.85\textwidth]{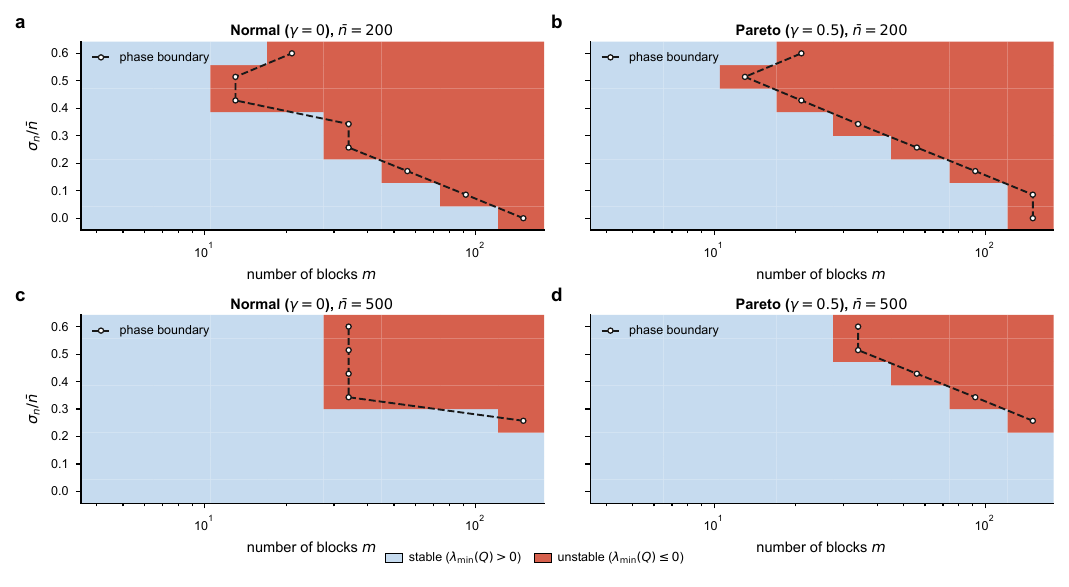}
    \caption{
        Stability of the DDEVD bandwidth optimisation under varying block sizes.
        With a fixed mean block size $\bar{n}$, the relative standard deviation $\sigma_n / \bar{n}$ of the block sizes is increased and the convergence of the bandwidth optimisation is recorded.
        The results confirm the conjectured behaviour of Conjecture~\ref{conjecture:unequal_block_sizes}: stability is preserved provided the variation in block sizes remains moderate.
    }
    \label{fig:stability_varying}
\end{figure}

\subsection{Known Base Distribution - MISE Evaluation}

To evaluate the \ac{mise} of the \ac{ddevd} estimator with the optimal bandwidth, we again generate synthetic datasets from known base distributions.
For each dataset, we compute the \ac{ddevd} estimator using the optimal bandwidth derived in Section~\ref{sec:main_results}.
We then estimate the \ac{mise} by averaging the integrated squared error over multiple runs.
The results are presented in Figure~\ref{fig:mise_known}.

\begin{figure}[ht]
    \centering
    \includegraphics[width=0.85\textwidth]{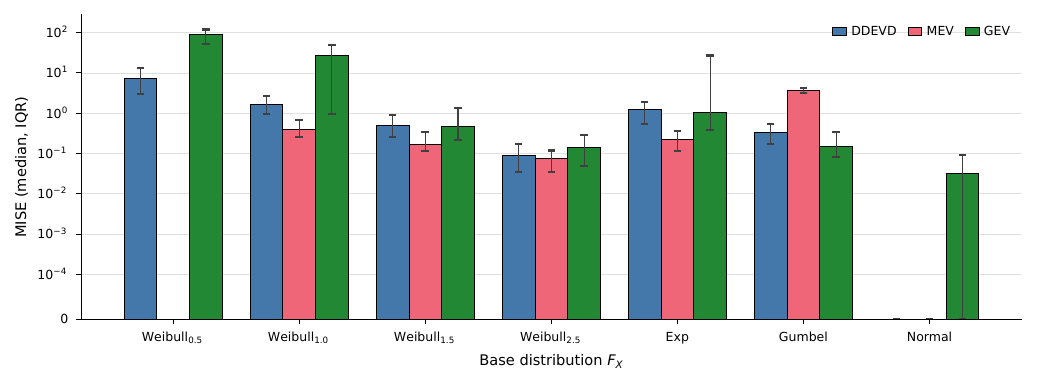}
    \caption{
        \ac{mise} of the DDEVD estimator with the analytically derived optimal bandwidth across a range of base distributions with known extreme value index $\gamma$.
        For each distribution, the integrated squared error is averaged over multiple synthetic samples.
        The DDEVD estimator is compared against benchmark estimators (\ac{gev} and \ac{mev}) to put the achieved error magnitudes into context.
    }
    \label{fig:mise_known}
\end{figure}

\section{Conclusion}\label{sec:conclusion}

In this work, we introduced the \ac{ddevd} estimator, a new non-parametric framework for estimating the distribution of extreme events.
Moving beyond the mere definition of the estimator itself, our primary contribution has been the rigorous theoretical analysis of its performance in the asymptotic regime $n\to\infty$.

Our analysis of the estimator's bias and variance has led to the derivation of an asymptotic expansion for the \ac{mise} of the estimator.
Using this expansion we subsequently solved for the explicit optimal bandwidth vector that minimizes this error criterion.

We made several key observations during this process. Notably, the analysis showed that the optimal bandwidth vector $h_{opt}$ is constant for each block of the data.
It can be computed by using an explicit formula, thereby getting rid of computationally intensive methods like cross-validation. Perhaps more importantly, we were
able to derive stability conditions under which the estimator maintains its performance.
It was shown that this is intrinsically linked to the extreme value index $\gamma$ of the underlying base distribution $F_X$. We have shown that for distributions
with $\gamma > -\frac12$, the stated optimization procedure is asymptotically stable provided the number of blocks
$m$ does not grow faster than $\mathcal{O}(n^{1+\frac{\gamma}{2}})$.

\subsection{Open problems and future work}\label{subsec:future_work}

The present paper proves several core results for the \ac{ddevd} estimator -- the asymptotic \ac{mise} expansion (Theorem~\ref{thm:asymptotic_expansion}), the closed-form optimal bandwidth (Theorem~\ref{thm:optimizer}), and the stability condition $m < C \cdot n^{1+\gamma/2}$ for equal block sizes and $\gamma > -\tfrac12$ (Theorem~\ref{theorem:stability}) -- but a number of gaps were flagged along the way and are collected here for clarity.

\paragraph{Stability theory.}
\begin{itemize}
    \item \emph{Iterative plug-in convergence.} The damped fixed-point scheme of Section~\ref{sec:main_results} is justified empirically; a rigorous convergence theory via contraction-mapping or fixed-point arguments, and an analytical characterisation of the relation between $\lambda$, $m$, and $n$ that guarantees a contraction, is open.
    \item \emph{Unequal block sizes.} Conjecture~\ref{conjecture:unequal_block_sizes} gives a perturbation-theoretic sketch of stability for unequal $n_i$, but the full Taylor expansion of the $b$- and $V$-coefficients around $\bar{n}$ and the resulting sharp bound on $\|\delta\mathbf{Q}\|$ remain to be carried out.
    \item \emph{Gumbel domain.} The stability proof covers $\gamma > -\tfrac12$ to leading order; the $\gamma = 0$ case is only partially handled because of the special asymptotic representation used in the proof. Closing this case rigorously requires a Watson-lemma modification adapted to densities of the form $f(t) \sim D\, t \log^\beta t$ near the upper endpoint.
    \item \emph{Boundary regularity of $f_X$.} The conditioning failure modes diagnosed in Section~\ref{subsec:diagnostics} are likely traceable to boundary-regularity assumptions on $f_X$ that the proof of Theorem~\ref{theorem:stability} makes implicitly. A formal characterisation of which $F_X$ require transformation, complementing the empirical diagnostics, is open.
\end{itemize}

\paragraph{Beyond the \ac{mise} criterion.}
The growth constraint $m = \Ocal(n^{1+\gamma/2})$ is a property of the \ac{mise} criterion, not of the estimator itself. Replacing \ac{mise} with an alternative target -- e.g.\ a tail-weighted loss, a quantile-loss, or a divergence-based criterion -- may relax the constraint and yield bandwidth selectors with different stability profiles.

\paragraph{Transformation theory.}
Section~\ref{sec:transforms} borrows the qualitative ``transformation helps if $R(f_Z) < R(f_Y)$'' heuristic from the KDE literature. Three concrete follow-ups remain:
\begin{itemize}
    \item Evaluate the leading-order $q$-\ac{mise} expansion of Theorem~\ref{thm:asymptotic_expansion} on the transformed scale, yielding a Jacobian-weighted analogue of the $y$-scale constant. Comparing canonical $(F_X, T)$ pairs would give the DDEVD analogue of \cite{wand1991transformations}.
    \item Build a data-driven transformation selector within the Box--Cox family by optimising an estimated $q$-\ac{mise} constant over the transformation parameter, in the spirit of \cite{wand1991transformations, park1990comparison}.
    \item Identify the precise roughness functional analogous to $\int (f')^2$ that drives DDEVD bias under transformation, given the $n_i$-th-power structure of the estimator.
\end{itemize}

\paragraph{Applications.}
Application to real-world settings where parametric \ac{evt} methods are prone to failure is a crucial next step.
Applied case studies on sub-daily rainfall, grain-size statistics in materials science, and other domains with a sharp boundary and heavy tail are deferred to a companion paper.

\subsubsection*{Acknowledgments}
{
  We thank the Data Lab Hell GmbH for supporting this research.
  We also thank our colleagues for their valuable feedback and discussions that contributed to the development of this work.
  Google Gemini was used to assist in the writing and editing of this manuscript.
  Claude Code was used to assist in the code implementation and numerical experiments.
}

\appendix
\section{Proof of Theorem~\ref{thm:asymptotic_expansion}}\label{appendix:asymptotic}

The proof requires us to derive asymptotic expansions for $\Bias_i(y)$ and $\Var(F_{i,h}^{n_i})(y)$ in powers of $h$.

\subsection{MSE from Bias and Variance expansions}

We have $\Bias(y) =  \E[\hat F_h(y)] - F(y) = \frac1m \sum_i \E F_{i, h}^{n_i}(y) - F_X^{n_i}$, so by defining
$\Bias_i(y) = \E F_{i,h}^{n_i} - F_X^{n_i}$ we can perform an expansion in terms of $h_{ij}$ up to third order. This
yield $\Bias_i(y) = b_{0,i}(y) + \bar{h_i} b_{1,i}(y) + \bar{h_i^2} b_{2a,i}(y) + \bar{h_i}^2 b_{2b,i}(y) +\mathcal{O}(h^3)$.
In a similar fashion we can derive an asymptotic expansion for 
$\Var(F_{i,h}^{n_i})(y) = V_{0,i}(y) + \bar{h_i} V_{1,i}(y) + \bar{h_i^2} V_{2a,i}(y) + \bar{h_i}^2 V_{2b,i}(y) + \mathcal{O}(h^3)$,
the exact and asymptotic forms of the coefficients will be derived in the following section. For now we will use this representation
to find an expansion of $\MSE$ in terms of $h$.

Note that by virtue of Theorem~\ref{thm:bias_variance} we can use bias and variance to express the $MSE$. For this, we need
\begin{align*}
\Bias^2(y) &= \left(\frac1m \sum_i \Bias_i(y)\right)^2 \\
           &= \left(\frac1m \sum_i b_{0,i}(y) + \bar{h_i} b_{1,i}(y) + \bar{h_i^2} b_{2a,i}(y) + \bar{h_i}^2 b_{2b,i}(y) +\mathcal{O}(h^3)\right)^2.
\end{align*}
By squaring this term and dropping higher order terms, we arrive at
\begin{align*}
\Bias^2(y) &= \frac{1}{m^2} \Bigg[\left(\sum_i b_{0,i}\right)^2 \\
           &+ 2\left(\sum_i b_{0,i}\right)\left(\sum_i \bar{h_i} b_{1,i}\right) \\
           &+ \left(\sum_i \bar{h_i} b_{1,i}\right)^2 + 2\left(\sum_i b_{0,i}\right)\left(\sum_i \bar{h_i^2} b_{2a,i} + \sum_i \bar{h_i}^2 b_{2b,i}\right) \Bigg] \\
           &+ \mathcal{O}(h^3)
\end{align*}

By adding this with the expanded Variance term, the $\MSE$ stated in Theorem~\ref{thm:asymptotic_expansion} can be obtained by simple addition.

\subsection{Explicit Forms of Coefficients}

The proofs largely rely on the binomial expansion. We shortly recall the binomial theorem:
\begin{theorem}[Binomial Theorem]
For any $x,y \in \mathbb{R}$ and $n \in \mathbb{N}$ it holds that
\begin{equation}
    (x+y)^n = \sum_{k=0}^n\binom{n}{k}x^{n-k}y^k
\end{equation}
\end{theorem}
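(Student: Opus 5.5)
The statement to prove is the Binomial Theorem, $(x+y)^n=\sum_{k=0}^n\binom{n}{k}x^{n-k}y^k$ for real $x,y$ and $n\in\mathbb{N}$. I will prove it by induction on $n$, using Pascal's rule $\binom{n}{k-1}+\binom{n}{k}=\binom{n+1}{k}$ for $1\le k\le n$, together with the conventions $\binom{n}{0}=\binom{n}{n}=1$.

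The base case $n=0$ reads $(x+y)^0=1=\binom{0}{0}x^0y^0$. If the paper's $\mathbb{N}$ starts at $1$, the base case is instead $(x+y)^1=x+y=\binom10x+\binom11y$.

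For the inductive step, assume the identity holds for $n$. I will write $(x+y)^{n+1}=(x+y)(x+y)^n$ and distribute.

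\begin{itemize}
\item Multiplying by $x$ gives $\sum_{k=0}^n\binom{n}{k}x^{n+1-k}y^k$.
\item Multiplying by $y$ gives $\sum_{k=0}^n\binom{n}{k}x^{n-k}y^{k+1}$. After shifting the index $k\mapsto k-1$, this becomes $\sum_{k=1}^{n+1}\binom{n}{k-1}x^{n+1-k}y^k$.
\end{itemize}

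Now I add the two sums. The term $k=0$ appears only in the first sum and gives $x^{n+1}$. The term $k=n+1$ appears only in the second sum and gives $y^{n+1}$. These match $\binom{n+1}{0}$ and $\binom{n+1}{n+1}$. For $1\le k\le n$, the coefficients combine to $\binom{n}{k}+\binom{n}{k-1}=\binom{n+1}{k}$, which gives exactly the claimed formula for $n+1$.

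The only point requiring care is Pascal's rule itself. I would verify it directly from $\binom{n}{k}=\frac{n!}{k!(n-k)!}$: over the common denominator $k!(n+1-k)!$, the numerator of the sum is $n!\,(n+1-k)+n!\,k=(n+1)!$.

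Beyond that, the main obstacle is just bookkeeping: keeping the boundary terms of the shifted sums straight. Since $x$ and $y$ commute, no other subtleties arise.
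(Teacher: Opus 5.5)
Your induction proof is correct and complete. The index shift, the handling of the boundary terms $k=0$ and $k=n+1$, and the check of Pascal's rule over the common denominator $k!(n+1-k)!$ are all right.

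The paper gives no proof of this statement to compare against. It only recalls the Binomial Theorem as a classical fact. It then uses it in the re-centred form $x^n=\sum_{k=0}^n\binom{n}{k}x_0^{n-k}(x-x_0)^k$, obtained by substituting $x\mapsto x_0$ and $y\mapsto x-x_0$, and afterwards takes expectations termwise. Your argument is the standard textbook one and fully justifies that use.

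One detail is worth stating explicitly: the convention $0^0=1$. The statement itself relies on it, for example in the $k=n$ term when $x=0$, or in the base case $n=0$ when $x+y=0$. Your step $x\cdot x^{n-k}=x^{n+1-k}$ at $k=n$ also uses it.
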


This can especially be used to expand $x^n$ around some point $x_0$
\begin{equation}
    x^n = \sum_{k=0}^n\binom{n}{k}x_0^{n-k}(x-x_0)^k,
\end{equation}
which is the core of the following proofs.

Further, we will need to estimate higher order moments of sums of bounded independent random variables.
The following lemma formalizes that these will be close to the moments of a normal distribution.

\begin{lemma}\label{lemma:central_moments}
Let $X_1, \ldots, X_n\sim X$ be i.i.d. random variables with $\E[X] = 0$, $\Var(X) = \sigma^2$. Further assume that all moments of $X$ exist.
Then the $k$-th central moment of the sample mean $\bar{X} = \frac1n \sum_{i=1}^n X_i$ can be approximated by the $k$-th moment of a normal 
distribution $N$ with mean $0$ and variance $\frac{\sigma^2}{n} = \Var(\bar X)$, namely
\begin{align*}
    \E \bar X^k = \E N^k + \mathcal{O}(n^{-\frac{k+1}{2}}).
\end{align*}
\end{lemma}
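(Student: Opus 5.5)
The plan is to prove the statement with the cumulant (moment–cumulant) expansion rather than a direct multinomial count, since cumulants behave simply under sums of independent variables. Denote by $\kappa_r$ the $r$-th cumulant of $X$. All moments of $X$ exist, so all $\kappa_r$ are finite. By assumption $\kappa_1 = \E X = 0$ and $\kappa_2 = \sigma^2$. Cumulants are additive over independent summands and homogeneous of degree $r$ under scaling. Hence the cumulants of $\bar X$ are
\[
\kappa_r(\bar X) = n \cdot \frac{\kappa_r}{n^r} = \frac{\kappa_r}{n^{r-1}} .
\]
In particular $\kappa_1(\bar X)=0$ and $\kappa_2(\bar X)=\sigma^2/n = \Var(\bar X)$. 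These first two cumulants coincide with those of $N \sim \mathcal{N}(0,\sigma^2/n)$, and all higher cumulants of $N$ vanish.

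Next I would write the $k$-th moment through the moment–cumulant formula:
\[
\E \bar X^k = \sum_{\pi} \prod_{B\in\pi} \kappa_{|B|}(\bar X),
\]
where the sum runs over all set partitions $\pi$ of $\{1,\dots,k\}$. Because $\kappa_1(\bar X)=0$, every partition containing a singleton block contributes nothing. A partition with $p$ blocks of sizes $r_1,\dots,r_p$ contributes a term of order
\[
\prod_{j=1}^{p} n^{-(r_j-1)} = n^{-(k-p)} .
\]
Applying the same formula to $N$, only the partitions into pairs survive. These give exactly $\E N^k$: it equals $(k-1)!!\,(\sigma^2/n)^{k/2}$ for even $k$ and $0$ for odd $k$. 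The pair partitions appear identically in the expansion of $\E \bar X^k$, so they cancel in the difference $\E\bar X^k - \E N^k$.

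It remains to bound the partitions with no singletons and at least one block of size $\ge 3$; this counting is the only real step. The aim is to show such a partition has at most $p \le (k-1)/2$ blocks. Suppose one block has size $r \ge 3$. The remaining $k-r$ elements lie in blocks of size $\ge 2$, so
\[
p \le 1 + \frac{k-r}{2} \le 1 + \frac{k-3}{2} = \frac{k-1}{2} .
\]
Thus each such term is $\Ocal\big(n^{-(k-(k-1)/2)}\big) = \Ocal\big(n^{-(k+1)/2}\big)$. The number of partitions is a finite constant depending only on $k$, and the cumulants $\kappa_r$ with $r\le k$ are finite constants. Therefore
\[
\E\bar X^k - \E N^k = \Ocal\big(n^{-(k+1)/2}\big),
\]
which is the claim. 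The bound is sharp for odd $k$, where the partition with one triple and the rest pairs gives an exact $n^{-(k+1)/2}$ term. For even $k$ the error is actually $\Ocal(n^{-k/2-1})$, which is consistent with the stated bound.
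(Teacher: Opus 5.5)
Your proof is correct and complete, but it takes a different route from the paper's.

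The paper standardizes, writing $\bar X = \frac{\sigma}{\sqrt n}Y_n$ and $N = \frac{\sigma}{\sqrt n}Z$. Everything then reduces to the claim $|\E Y_n^k - \E Z^k| \le C_k n^{-1/2}$. That claim is only sketched, by comparing the characteristic function of $Y_n$ with $\ee^{-t^2/2}$. The actual estimate is deferred to Petrov's book, and the displayed integral bound in that sketch is not justified in the text.

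You instead work directly with the unnormalized mean:
\begin{itemize}
\item additivity and homogeneity of cumulants give $\kappa_r(\bar X)=\kappa_r n^{1-r}$;
\item the moment--cumulant formula over set partitions shows that the pair partitions reproduce $\E N^k$ exactly;
\item the counting bound $p\le (k-1)/2$ for the remaining singleton-free partitions gives the $n^{-(k+1)/2}$ rate.
\end{itemize}

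What your route gains:
\begin{itemize}
\item it is fully self-contained and elementary, and needs only finiteness of moments up to order $k$;
\item it exhibits the constant explicitly, as a finite combination of cumulants of $X$;
\item it gives the sharper rate $\Ocal(n^{-k/2-1})$ for even $k$.
\end{itemize}
Your sharpness remark for odd $k$ additionally requires $\kappa_3=\E X^3\neq 0$.

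Moments are derivatives of the characteristic function at the origin. So a rigorous version of the paper's Fourier sketch would essentially reduce to expanding $\log \varphi_n$ near $t=0$, which is your cumulant computation. The paper's framing mainly ties the lemma to the Edgeworth/Berry--Esseen literature rather than adding strength.
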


\begin{proof}
We let $Y_n = \frac{1}{\sigma \sqrt{n}} \sum_{i=1}^n X_i$, so that $\E Y_n = 0$ and $\Var(Y_n) = 1$.
A short computation reveals $\bar X = \frac{\sigma}{\sqrt{n}} Y_n$.
Further the target normal distribution $N$ can be written as $\frac{\sigma}{\sqrt{n}} Z$ with $Z\sim\mathcal{N}(0,1)$.
Now our goal is to bound
\begin{align*}
    |\E \bar X^k - \E N^k| &= \left(\frac{\sigma}{\sqrt{n}}\right)^k |\E Y_n^k - \E Z^k|
\end{align*}
Estimates of this kind are handled for example in \cite{petrov2012sums}.
We will just sketch the idea here, for a detailed treatment we refer to the book by Petrov.
The key idea is to perform a Fourier transform and bound the $k$-th derivative of the characteristic function at $0$.
For the Gaussian, its characteristic function is given by
\begin{align*}
    \varphi(t) = \E \ee^{\mathrm{i}tZ} = e^{-\frac{t^2}{2}}.
\end{align*}
and for the sum, it can be split into 
\begin{align*}
    \varphi_n(t) &= \left(\E \ee^{\mathrm{i}tY_n}\right)^n\\ 
    & = \left(\E \ee^{\mathrm{i}t\frac{X_1}{\sigma \sqrt{n}}}\right)^n \\
\end{align*}
By doing a Taylor series expansion of the exponential around $0$, we can see that the leading order term is the same
as for the Gaussian. By cancelling out the first order terms in the derivatives, we arrive at
\begin{align*}
    |\E Y_n^k - \E Z^k| &\leq \int |t|^{-1} |\varphi_n^{(k)}(t) - \varphi^{(k)}(t)| \mathrm{d}t \\
                      &\leq C_k n^{-\frac12}
\end{align*}
for some constant $C_k$. Combining this with the normalization above yields the claim.
\end{proof}

\subsubsection{Expansion of $\E[F_{i,h}^n(y)]$}
The core of the proof is the expansion of $\E[F_{i,h}^n(y)]$. We begin with a binomial expansion around $\E[F_{i,h}(y)]$:
\begin{equation}
    \E[F_{i,h}^n(y)] = \sum_{k=0}^n\binom{n}{k}\left(\E[F_{i,h}(y)]\right)^{n-k} \E\left[(F_{i,h}(y) - \E[F_{i,h}(y)])^k\right]
\end{equation}
Using Lemma~\ref{lemma:central_moments}, the central moments can be approximated (up to terms on the order of $\frac{1}{\sqrt{n}}$) by the moments of a normal distribution, simplifying the sum to:
\begin{equation}
    \E[F_{i,h}^n(y)] \approx \sum_{k=0}^{\lfloor n/2 \rfloor}\binom{n}{2k}\left(\E[F_{i,h}(y)]\right)^{n-2k} \sigma_{i,h}^{2k}(2k-1)!!
\end{equation}
where $\sigma_{i,h}^2 = \Var(F_{i,h}(y))$.

\subsubsection{Preliminary Expansions}
We require the asymptotic expansions for $\E[F_{i,h}(y)]$ and $\sigma_{i,h}^2$. Using integration by parts and Taylor series on the kernel terms, we find:
\begin{align}
    \E[F_{i,h}(y)] &= F_X(y) - \bar{h_{i}} f_X(y)\mu_{K,1} + \frac{\overline{h_{i}^2}}{2} f'_X(y)\mu_{K,2} + \Ocal(\overline{h^3}) \\
    \sigma^2_{i,h} &= \frac{1}{n_i}\left(F_X(y)(1-F_X(y)) - \bar h_i \alpha(y) + \overline{h_i^2} \beta(y) + \Ocal(\overline{h^3})\right)
\end{align}
where $\mu_{K,p}$ are kernel moments and the functions $\alpha(y)$ and $\beta(y)$ are defined as:
\begin{align*}
    \alpha(y) &= f_X(y) \mu_{K^2, 1} - 2F_X(y)f_X(y)\mu_{K,1} \\
    \beta(y) &= \frac{f'_X(y)\mu_{K^2,2}}{2} - f'_X(y)F_X(y)\mu_{K,2} - f_X^2(y)\mu_{K,1}^2
\end{align*}

\subsubsection{Helper Functions $FY_{s,N}$}
To simplify the notation, we introduce a set of helper functions $FY_{s,N}(y)$. In summation form, they are:
\begin{align*}
    FY_{0,N}(y) &= \sum_{k=0}^{\lfloor N/2 \rfloor}\binom{N}{2k}(2k-1)!! \frac{1}{n_i^k} F_X^{N-k}(y)(1-F_X(y))^k \\
    FY_{1,N}(y) &= - \sum_{k=1}^{\lfloor N/2 \rfloor}k\binom{N}{2k}(2k-1)!! \frac{1}{n_i^k} F_X^{N-k-1}(y)(1-F_X(y))^{k-1}\alpha(y) \\
    FY_{2,\alpha,N}(y) &= \sum_{k=2}^{\lfloor N/2 \rfloor}\binom{N}{2k}(2k-1)!! \frac{k(k-1)}{2n_i^k}F_X^{N-k-2}(y)(1-F_X(y))^{k-2}\alpha(y)^2 \\
    FY_{2,\beta,N}(y) &= \sum_{k=1}^{\lfloor N/2 \rfloor}\binom{N}{2k}(2k-1)!! \frac{k}{n_i^k}F_X^{N-k-1}(y)(1-F_X(y))^{k-1}\beta(y)
\end{align*}
Let $Z\sim\mathcal{N}(0,1)$ and $c = \frac{1}{\sqrt{n_i}} \sqrt{\frac{1-F_X(y)}{F_X(y)}}$.
Then the moments of $Z$ are given by $\E[Z^{2k}] = (2k-1)!!$ and $\E[Z^{2k+1}] = 0$. Using this, we can
again use the binomial theorem to simplify the sums.
\begin{align*}
    FY_{0,N}(y) &= F_X^N(y) \E\left[(1+cZ)^N\right] \\
    FY_{1,N}(y) &= -\alpha(y)\frac{N}{2\sqrt{n_i}}\frac{F_X^{N-1}(y)}{\sqrt{F_X(y)(1-F_X(y))}} \E\left[Z(1+cZ)^{N-1}\right] \\
    FY_{2,\alpha,N}(y) &= \alpha^2(y)\frac{N}{8 n_i}\frac{F_X^{N-2}(y)}{F_X(y)(1-F_X(y))} \E\left[Z \left((N-2)Z - \frac{1}{c}\right)(1+cZ)^{N-2}\right] \\
    FY_{2,\beta,N}(y) &= \beta(y)\frac{N}{2\sqrt{n_i}}\frac{F_X^{N-1}(y)}{\sqrt{F_X(y)(1-F_X(y))}} \E\left[Z(1+cZ)^{N-1}\right]
\end{align*}
We now proceed to compute an approximation of the expectations for large $n_i$, that is for small $c$.
\begin{align*}
\E\left[(1+cZ)^N\right] &= \frac{1}{\sqrt{2\pi}}\int_\R (1+cz)^N \phi(z) \dd z \\ &= \frac{1}{\sqrt{2\pi}}\int_\R \ee^{N \log{1+cz}} \phi(z) \dd z \\
&= \frac{1}{\sqrt{2\pi}}\int_\R \ee^{N \left(cz - \frac{c^2z^2}{2} + \Ocal(c^3)\right)} \phi(z) \dd z \\ 
&\approx \frac{1}{\sqrt{2\pi}}\int_\R \ee^{N \left(cz - \frac{c^2z^2}{2}\right) - \frac{z^2}{2}} \dd z
\end{align*}
as $Nc^3 \to 0$ for large $n_i$. The integral can be computed by completing the square.
\begin{align*}
    \frac{1}{\sqrt{2\pi}}\int_\R e^{-\frac{1}{2}\left((Nc^2 + 1)z^2 - 2Ncz\right)} \dd z &= \frac{e^{\frac{N^2c^2}{2(Nc^2 + 1)}}}{\sqrt{2\pi}}\int_\R e^{-\frac{1}{2}\left(\sqrt{Nc^2 + 1}z - \frac{Nc}{\sqrt{Nc^2 + 1}}\right)^2} \dd z \\
    &=  \frac{e^{\frac{N^2c^2}{2(Nc^2 + 1)}}}{\sqrt{Nc^2 + 1}} =: \rho(c, N).
\end{align*}
Note that $c = \frac{1}{\sqrt{n_i}} \sqrt{\frac{1-F_X(y)}{F_X(y)}}$, such that $Nc^2 + 1 = \frac{N(1-F_X(y)) + n_i F_X(y)}{n_i F_X(y)}$ and $1-\frac{1}{Nc^2 + 1} = \frac{N(1-F_X(y))}{N(1-F_X(y)) + n_i F_X(y)}$.

By taking the derivative, we see $\frac{\partial \rho}{\partial c} = \frac{\partial}{\partial c} \E(1+cZ)^N = N \E Z (1+cZ)^{N-1}$, the quantity we need for $Y_1$ and $Y_{2, \beta}$.
\begin{align*}
    \E Z (1+cZ)^{N-1} = \frac1N \frac{\partial \rho}{\partial c} &= \frac1N \frac{\partial}{\partial c} \frac{\ee^{\frac{N}{2}\left(1 - \frac{1}{Nc^2 + 1}\right)}}{\sqrt{Nc^2 + 1}}\\
    & = \underbrace{\frac{c(N-Nc^2-1)}{(Nc^2 +1)^2}}_{=:z_1(N,c)} \rho(N,c).
\end{align*}
In a similar fashion, we get $\E Z^2 (1+cZ)^{N-2}$ by taking the derivative twice:
\begin{align*}
    \E Z^2 (1+cZ)^{N-2} = \frac1{N(N-1)} \frac{\partial^2 \rho}{\partial c^2} = \underbrace{\frac{2N^3c^6 + (3N^2 - 5N^3)c^4 + (N^3 - 4N^2)c^2 + N - 1}{(N-1)(Nc^2 + 1)^4}}_{=:z_2(N,c)}\rho(N,c).
\end{align*}

\subsubsection{Expansion Coefficients for $\E[F_{i,h}^N(y)]$}

Let $\E[F_{i,h}^N(y)] = E_{0,N}(y) + h_i E_{1,N}(y) + h_i^2(E_{2a,N}(y) + E_{2b,N}(y)) + \Ocal(h^3)$. The coefficients are:

\begin{align*}
    E_{0,N}(y) &= FY_{0,N}(y) \\
    E_{1,N}(y) &= FY_{1,N}(y) - N f_X(y)\mu_{K,1}FY_{0,N-1}(y) \\
    E_{2a,N}(y) &= FY_{2,\beta,N}(y) + \frac{N}{2}f'_X(y)\mu_{K,2}FY_{0,N-1}(y) \\
    E_{2b,N}(y) &= FY_{2,\alpha,N}(y) - N f_X(y)\mu_{K,1}FY_{1,N-1}(y) + \binom{N}{2}f_X^2(y)\mu_{K,1}^2FY_{0,N-2}(y)
\end{align*}

A reasonable assumption would be $\mu_{K,1} = 0$, which simplifies the expressions for $E_{1,N}(y)$ and $E_{2b,N}(y)$ as well as $\alpha$ and $\beta$ considerably, a fact we will use in the following section.

\subsection{Final Bias and Variance Coefficients}

\subsubsection{The Bias Coefficients}
According to the previous section, the bias coefficients are given by
\begin{align*}
    b_{0,i}(y) &= E_{0,n_i}(y) - F_X^{n_i}(y) & b_{1,i}(y) &= E_{1,n_i}(y) \\
    b_{2a,i}(y) &= E_{2a,n_i}(y) & b_{2b,i}(y) &= E_{2b,n_i}(y)
\end{align*}

From now on we assume the large $n$ limit and a kernel $K$ centered at $0$, i.e. $\mu_{K,1} = 0$.
We then have the following approximations for the bias coefficients:
\begin{align*}
    b_{0,i}(y) & = FY_{0,n_i}(y) - F_X^{n_i}(y) \approx  F_X^{n_i}(y) (\rho(n_i, c) - 1)
\end{align*}
When inserting $n_i$ into $\rho$, the expression simplifies considerably, namely $\rho(n_i, c) = \sqrt{F_X} \ee^{\frac{n_i}2 (1-F_X)}$. Thus, we have
\begin{align*}
    b_{0,i}(y) & \approx  F_X^{n_i}(y) (\sqrt{F_X} \ee^{\frac{n_i}2 (1-F_X)} - 1)
\end{align*}

\begin{align*}
    b_{1,i}(y) & = FY_{1,n_i}(y) \approx  -\alpha(y)\frac{n_i}{2\sqrt{n_i}}\frac{F_X^{n_i-1}(y)}{\sqrt{F_X(y)(1-F_X(y))}} \left(\frac{n_ic}{n_ic^2+1} - 1\right)\frac{\rho(n_i,c)}{n_ic^2+1} \\
    &= -\alpha(y)\frac{\sqrt{n_i}}{2}\frac{F_X^{n_i}(y)}{\sqrt{(1-F_X(y))}} \left(\sqrt{n_i F_X (1-F_X)} - 1\right) \ee^{\frac{n_i}2 (1-F_X)}
\end{align*}

\begin{align*}
    b_{2a,i}(y) & = FY_{2,\beta,n_i}(y) + \frac{n_i}{2}f'_X(y)\mu_{K,2}FY_{0,n_i-1}(y) \\
    &\approx \beta(y)\frac{\sqrt{n_i}}{2}\frac{F_X^{n_i}(y)}{\sqrt{(1-F_X(y))}}\left(\sqrt{n_i F_X (1-F_X)} - 1\right) \ee^{\frac{n_i}2 (1-F_X)}+ \\&
    + \frac{n_i}{2}f'_X(y)\mu_{K,2} F_X^{n_i-1}(y) \sqrt{F_X} \ee^{\frac{n_i-1}2 (1-F_X)}
\end{align*}

\begin{align*}
    b_{2b,i}(y) & = FY_{2,\alpha, n_i} \\&\approx \frac{\alpha^2(y)}{8}\frac{F_X^{n_i-3}(y)}{1-F_X(y)} \left((n_i-2)z_2(n_i, c)\rho(n_i, c) - \frac{1}{c}z_1(n_i-1, c)\rho(n_i-1, c)\right)
\end{align*}

\subsubsection{The Variance Coefficients}
The variance coefficients are given by

\begin{align*}
    V_{0,i}(y) &= E_{0,2n_i}(y) - E_{0,n_i}(y)^2 \\
    V_{1,i}(y) &= E_{1,2n_i}(y) - 2E_{0,n_i}(y)E_{1,n_i}(y) \\
    V_{2a,i}(y) &= E_{2a,2n_i}(y) - 2E_{0,n_i}(y)E_{2a,n_i}(y) \\
    V_{2b,i}(y) &= E_{2b,2n_i}(y) - 2E_{0,n_i}(y)E_{2b,n_i}(y) - E_{1,n_i}(y)^2
\end{align*}

By substituting the large-$n_i$ approximations for the coefficients $E_{s,N}(y)$ (assuming a centered kernel with $\mu_{K,1}=0$), we can write these variance coefficients in a form analogous to the bias coefficients.

The zeroth-order coefficient is:
\begin{align*}
    V_{0,i}(y) & \approx F_X^{2n_i}(y) \left( \rho(c, 2n_i) - \rho(c, n_i)^2 \right)
\end{align*}

The first-order coefficient depends on $\alpha(y) = f_X(y) \mu_{K^2, 1}$:
\begin{align*}
    V_{1,i}(y) & \approx -\frac{\alpha(y) n_i F_X^{2n_i-1}(y)}{\sqrt{n_i F_X(y)(1-F_X(y))}} \left( z_1(2n_i, c)\rho(c, 2n_i) - z_1(n_i, c)\rho(c, n_i)^2 \right)
\end{align*}

The second-order coefficients are split into two parts. The first part, $V_{2a,i}$, depends on $\beta(y) = \frac{f'_X(y)\mu_{K^2,2}}{2} - f'_X(y)F_X(y)\mu_{K,2}$:
\begin{align*}
    V_{2a,i}(y) \approx& \frac{\beta(y) n_i F_X^{2n_i-1}(y)}{\sqrt{n_i F_X(y)(1-F_X(y))}} \left( z_1(2n_i, c)\rho(c, 2n_i) - z_1(n_i, c)\rho(c, n_i)^2 \right) \\
    &+ n_i f'_X(y)\mu_{K,2} F_X^{2n_i-1}(y) \left( \rho(c, 2n_i-1) - \rho(c, n_i)\rho(c, n_i-1) \right)
\end{align*}

The second part, $V_{2b,i}$, depends on $\alpha(y)^2$. Let 
\[\mathcal{Z}(N,c) = \left((N-2)z_2(N, c)\rho(N, c) - \frac{1}{c}z_1(N-1, c)\rho(N-1, c)\right).\] Then
\begin{align*}
    V_{2b,i}(y) \approx& \frac{\alpha^2(y) F_X^{2n_i-2}(y)}{4 F_X(y)(1-F_X(y))} \left( \mathcal{Z}(2n_i,c) - \rho(c,n_i)\mathcal{Z}(n_i,c) \right) \\
    &- \frac{\alpha^2(y) n_i F_X^{2n_i-2}(y)}{F_X(y)(1-F_X(y))} z_1(n_i,c)^2\rho(n_i,c)^2
\end{align*}
These expressions, while complex, provide the complete second-order expansion of the variance needed to compute the MISE and the optimal bandwidth.

\section{Proof of Theorem~\ref{thm:optimizer}}\label{appendix:optimization}

In order to derive the expressions for the matrix $\mathbf{Q}$ and the vector $\mathbf{c}$, we need to compute the 
derivative of the MSE with respect to $h$. We will compute the derivative with respect to $h_{ij}$.

Note that 
\[\frac{\partial \bar h_k}{\partial h_{ij}} = \frac1{n_i}\delta_{ik},\]
\[\frac{\partial \bar{h_k^2}}{\partial h_{ij}} = \frac{2h_{ij}}{n_i}\delta_{ik},\]
and
\[\frac{\partial \bar{h_k}^2}{\partial h_{ij}} = \frac{2 \bar h_i}{n_i}\delta_{ik}\]

We can now use these expressions in the derivative of the MSE up to order $h^3$. Further let $b_0:=\sum_i b_{0,i}$.
\begin{align*}
\frac{\partial\MSE}{\partial h_{ij}} &= \frac{1}{m^2} \left[ 2 b_0 \left(\sum_k \frac{\partial \bar h_k}{\partial h_{ij}} b_{1,k}\right) + \sum_k \frac{\partial \bar h_k}{\partial h_{ij}}  V_{1,k} \right] \\ 
            &\quad+ \frac{1}{m^2} \Bigg[ 2\left(\sum_k \bar{h_k} b_{1,k}\right)\left(\sum_k\frac{\partial \bar h_k}{\partial h_{ij}} b_{1,k}\right) \\
            &\qquad\qquad + 2 b_0 \sum_k \left(\frac{\partial \bar{h_k^2}}{\partial h_{ij}} b_{2a,k} + \frac{\partial \bar{h_k}^2}{\partial h_{ij}^2} b_{2b,k}\right) \\
            &\qquad\qquad + \sum_k \left( \frac{\partial \bar{h_k^2}}{\partial h_{ij}} V_{2a,k} + \frac{\partial \bar{h_k}^2}{\partial h_{ij}^2} V_{2b,k} \right) \Bigg]\\
            &=\frac{1}{m^2n_i} ( 2 b_0 b_{1,i} + V_{1,i} ) \\ 
            &\quad+ \frac{1}{m^2n_i} \Bigg[ 2\left(\sum_k \bar{h_k} b_{1,k}\right)b_{1,i} \\
            &\qquad\qquad + 4 b_0 h_{ij} b_{2a,i} + \bar{h_i} b_{2b,i}\\
            &\qquad\qquad + 2 h_{ij} V_{2a,i} + \bar{h_i} V_{2b,i}\Bigg]
\end{align*}
By setting this to zero, we obtain the system of equations
\begin{align*}
 2\left(\sum_k \bar{h_k} b_{1,k}\right)b_{1,i} + 4 b_0 (h_{ij} b_{2a,i} + \bar{h_i} b_{2b,i}) + 2 h_{ij} V_{2a,i} + \bar{h_i} V_{2b,i} = - (2 b_0 b_{1,i} + V_{1,i})
\end{align*}
Now notice that all coefficients are independent of $j$, so the solutions will also be independent of $j$, which means they are blockwise constant. This shows the final statement of the theorem. Taking this into account (by setting $\bar h_i = h_{ij} = h_i$), we see that
\begin{align*}
 \underbrace{2\left(\sum_k h_k b_{1,k}\right)b_{1,i} + 4 b_0 h_i (b_{2a,i} + b_{2b,i}) + 2 h_i (V_{2a,i} + V_{2b,i})}_{=2\mathbf{Q}_i \mathbf{h}} = - \underbrace{(2 b_0 b_{1,i} + V_{1,i})}_{=\mathbf{c}_i}
\end{align*}
By introducing the notation $b_{2,i} = b_{2a,i} + b_{2b, i}$ and $V_{2,i} = V_{2a,i} + V_{2b,i}$, we arrive at the statement of the theorem.

\section{Proof of Theorem~\ref{theorem:stability}}\label{appendix:stability}

Lemma~\ref{lemma:positive_definite} and the expressions derived in the proof of Theorem~\ref{thm:asymptotic_expansion} already provide the needed tools to establish the stability of the system.
In this section we always denote the number of blocks with $m$ and the block size with $n$, as all blocks are assumed to have the same size.
Our goal is to derive an asymptotic (in $n$) closed form expression for $D$, as defined in Lemma~\ref{lemma:positive_definite}.
For an asymptotic analysis, we assume large $n$, such that we only need to consider the dominant terms (with respect to $n$) in each expression.
In this case, we can also assume $n\approx n-1 \approx n-2$ to simplify the expressions.

Recall 
\[
  D = \int_{-\infty}^\infty 2m b_0(y) b_2(y) + V_2(y) \dd y
\]

We will consider the optimization of the $q$-MISE, that is we perform the integration starting from the $q$-quantile of the base distribution $F_X$.
Further, we will assume that $F_X$ is heavy tailed with parameter $\gamma$,
this means we consider 
\[
  D_q = \int_{F_X(y)\geq q} 2m b_0(y) b_2(y) + V_2(y) \dd y
\]

The determination of the sign of $D_q$ involves very intricate calculations and the number of terms in the individual integrals is quite large. 
We will resort to the asymptotic behaviour (in terms of $n$) and follow the following general strategy:
\begin{enumerate}
\item {\bf Scaling check:} We will first check each additive component and see how it scales with $n$.
     We can do this without fully performing the integration, but we need to take the behaviour of the integrand into accound.
     As a first step, we will establish some intuition about how this scaling can be estimated via \emph{Watson's lemma}.
\item {\bf Dominant term:} We will then identify the dominant term in the integral (i.e. the term that grows fastest with $n$).
    For large $n$, this will dominate and give the asymptotic behaviour of the integral.
\end{enumerate}

The next subsection will introduce the methods we will use to perform the scaling check and the identification of the dominant term.

\subsection{Used methods}

\subsubsection{Watson's lemma}

Watson's lemma is a powerful tool for asymptotic analysis of integrals, especially when dealing with integrals of the form
\[
    I(n) = \int_0^T f(t) e^{-n t} \dd t
\]
where $f(t)$ is a function that behaves like a polynomial for large $t$ and $T$ can be a positive constant or infinity.
We want to look at the limiting behaviour of $I(n)$ as $n \to \infty$.
Intuitively, for large $n$ the exponential term $e^{-n t}$ makes sure that the integral is dominated by the behaviour of $f(t)$ near $t=0$.

\begin{theorem}[Watson's lemma, \cite{miller2006applied}]\label{thm:watson}
    Let $0<T\leq \infty$. $f(t)$ be a function such that $f(t) = t^k g(t)$ for some $k > -1$ where $g(t)\in\mathcal{C}^\infty(B)$ for some open set $B$ containing $[0, T]$ with $g(0)\neq 0$.
    Additionally we assume $|f(t)| < K \ee^{bt}$ for all $t>0$ and some constants $K, b > 0$ or $\int_0^T |f(t)| \dd t < \infty$.
    Then
    \[\left|\int_0^T \ee^{-nt} f(t) \dd t\right| < \infty\]
    and
    \[
    \int_0^T \ee^{-nt} f(t) \dd t \asymp_n \sum_{j=0}^\infty \frac{g^{(j)}(0) \Gamma(k+j+1)}{j! n^{k+j+1}}.
    \]
\end{theorem}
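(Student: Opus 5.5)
The plan is the classical splitting argument for Laplace-type integrals: the exponential weight $\ee^{-nt}$ localises the integral near $t=0$, and near zero we replace $g$ by its Taylor polynomial and integrate term by term against $t^k\ee^{-nt}$ using the Gamma function. We only need to prove that for every fixed $N\in\mathbb{N}$
\[
\int_0^T \ee^{-nt} f(t)\,\dd t = \sum_{j=0}^{N} \frac{g^{(j)}(0)\,\Gamma(k+j+1)}{j!\,n^{k+j+1}} + \Ocal\!\left(n^{-(k+N+2)}\right),
\]
since this is exactly the meaning of the asymptotic series $\asymp_n$. Finiteness of the integral is immediate. Under the growth bound $|f(t)|<K\ee^{bt}$, the integral is dominated by $\int_0^\infty t^{k}|g|\ee^{-(n-b)t}\,\dd t$ near $0$ and by $K\ee^{-(n-b)t}$ away from $0$, which is finite for $n>b$, where $k>-1$ gives integrability at the origin. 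Under the alternative hypothesis $\int_0^T|f|<\infty$ we simply use $\ee^{-nt}\le 1$.

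First, fix $\delta\in(0,\min(T,1))$ with $[0,\delta]\subset B$, and split the integral as $I(n)=I_1(n)+I_2(n)$ over $[0,\delta]$ and $[\delta,T]$. For the tail piece $I_2$, under the integrability hypothesis we get $|I_2(n)|\le \ee^{-n\delta}\int_\delta^T|f|\,\dd t$. Under the growth hypothesis we get, for $n>b$,
\[
|I_2(n)|\le K\int_\delta^\infty \ee^{-(n-b)t}\,\dd t=\frac{K\ee^{-(n-b)\delta}}{n-b}.
\]
Either way $I_2(n)=\Ocal(\ee^{-n\delta/2})$, which is smaller than any power of $n$. The tail therefore contributes nothing to the expansion.

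Second, on $[0,\delta]$ we use Taylor's theorem with remainder, which is allowed because $g\in\mathcal{C}^\infty(B)$:
\[
g(t)=\sum_{j=0}^N \frac{g^{(j)}(0)}{j!}t^j + R_N(t), \qquad |R_N(t)|\le M_N t^{N+1} \text{ on } [0,\delta].
\]
Integrating the polynomial part against $t^k\ee^{-nt}$ over $[0,\delta]$, we extend each integral to $[0,\infty)$. The extension costs $\int_\delta^\infty t^{k+j}\ee^{-nt}\,\dd t$, which is again exponentially small. The full integral is evaluated with the substitution $s=nt$:
\[
\int_0^\infty t^{k+j}\ee^{-nt}\,\dd t=\frac{\Gamma(k+j+1)}{n^{k+j+1}},
\]
valid since $k+j>-1$. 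This produces exactly the stated coefficients. The remainder is bounded by
\[
M_N\int_0^\infty t^{k+N+1}\ee^{-nt}\,\dd t = M_N\,\Gamma(k+N+2)\,n^{-(k+N+2)},
\]
which is of strictly smaller order than the last retained term.

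The only real subtlety is the uniformity of the remainder bound. It requires $[0,\delta]$ to sit inside $B$, so that $g^{(N+1)}$ is bounded there, and this is why we choose $\delta$ small rather than splitting at $T$. A second point to handle is the case $T<\infty$ combined with the integrability hypothesis: there $\ee^{-nt}$ does not decay faster than the integrand grows on a bounded interval, but $\ee^{-nt}\le\ee^{-n\delta}$ on $[\delta,T]$ still gives the exponential tail bound. The assumption $g(0)\neq0$ is not needed for the expansion itself. It only ensures that the leading term $g(0)\Gamma(k+1)n^{-(k+1)}$ is genuinely the dominant order, which is how the lemma will be used to read off the $n$-scaling of the components of $D_q$.
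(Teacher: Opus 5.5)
Your proposal is correct and is the standard proof: split at a small $\delta$ with $[0,\delta]\subset B$, Taylor-expand $g$ with a uniform remainder on $[0,\delta]$, extend the monomial integrals to $[0,\infty)$ at exponentially small cost, and bound the tail by $\ee^{-n\delta}$. The paper does not prove this itself but delegates to Miller, Chapter 2.2, and your argument is essentially that one. One point in your favour is that you use the growth bound only on $[\delta,T]$, because read literally, $|f(t)|<K\ee^{bt}$ for all $t>0$ cannot hold when $k<0$ and $g(0)\neq 0$.
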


\begin{proof}
See \cite{miller2006applied}, Chapter 2.2 for a proof.
\end{proof}

For large $n$ and if $g$ is independent of $n$, we see that the sum on the right-hand side is dominated by the first term, which gives us
\[\int_0^T \ee^{-nt} f(t) \dd t \asymp_n \frac{g(0) \Gamma(k+1)}{n^{k+1}}.\]

This means that once we have identified the leading term in the integrand, we can use Watson's lemma to compute the asymptotic behaviour of the integral.

{\bf Heuristic:} If the leading order of the integrand is $k$, then the integral behaves like $\frac{1}{n^{k+1}}$ (up to constants).

Further, we will need a modification of the above integral to account for a different power of $t$ in the exponential.
\begin{lemma}\label{lemma:mod_watson}
Consider the same setting as in Theorem~\ref{thm:watson}. Then for $c>0$, asymptotically, we have
\[\int_{0}^{T} \ee^{-cnt^2} f(t) \dd t \asymp_n \sum_{j=0}^\infty \frac{g^{(j)}(0)}{j!} \frac{\Gamma\left(\frac{j+\gamma+1}{2}\right)}{2 (cn)^{\frac{j+\gamma+1}{2}}}.\]
\end{lemma}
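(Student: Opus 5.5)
We read the exponent $\gamma$ in the statement as the exponent $k$ of Theorem~\ref{thm:watson}, so that $f(t)=t^{\gamma}g(t)$ with $\gamma>-1$. The plan is to repeat the standard proof of Watson's lemma (Theorem~\ref{thm:watson}), with the exponential $\ee^{-nt}$ replaced by $\ee^{-cnt^2}$. The one genuinely new ingredient is the elementary moment identity
\[
\int_0^\infty t^{p}\,\ee^{-cnt^2}\,\dd t=\frac{\Gamma\left(\frac{p+1}{2}\right)}{2(cn)^{\frac{p+1}{2}}},\qquad p>-1 .
\]
It follows from the substitution $s=cnt^2$, under which $\dd t=\dd s/(2\sqrt{cn\,s})$ and the integral becomes $\frac{1}{2}(cn)^{-\frac{p+1}{2}}\int_0^\infty s^{\frac{p-1}{2}}\ee^{-s}\,\dd s$. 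Applying this with $p=\gamma+j$ to each term of the Taylor series of $g$ produces exactly the $j$-th summand of the claimed expansion.

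\textbf{Step 1 (split the integral).} Fix $\delta\in(0,T)$ small enough that $[0,\delta]\subset B$, and write $\int_0^T=\int_0^\delta+\int_\delta^T$.
\begin{itemize}
\item Suppose the growth bound $|f(t)|<K\ee^{bt}$ holds. On $[\delta,\infty)$ we have $cnt^2-bt\ge cn\delta t-bt\ge \frac{cn\delta}{2}t$ as soon as $n\ge 2b/(c\delta)$. Hence $\int_\delta^T|f|\,\ee^{-cnt^2}\,\dd t=\Ocal(\ee^{-cn\delta^2/2})$.
\item Suppose instead $f\in L^1(0,T)$. Then the same tail is bounded directly by $\ee^{-cn\delta^2}\|f\|_{L^1}$.
\end{itemize}
Either way the tail is exponentially small, so it is beyond all polynomial orders in $n$.

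\textbf{Step 2 (local expansion).} On $[0,\delta]$ we use Taylor's theorem with $N$ terms:
\[
g(t)=\sum_{j=0}^{N-1}\frac{g^{(j)}(0)}{j!}\,t^j+R_N(t),\qquad |R_N(t)|\le M_N\,t^N .
\]
The bound on $R_N$ holds because $g\in\mathcal{C}^\infty$ on a neighbourhood of $[0,\delta]$.

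\textbf{Step 3 (monomial terms).} For each $j<N$ we extend $\int_0^\delta t^{\gamma+j}\ee^{-cnt^2}\,\dd t$ to $[0,\infty)$. The added piece is again exponentially small, by the same estimate as in Step~1, so the moment identity above gives the $j$-th term of the expansion.

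\textbf{Step 4 (remainder).} The remainder contributes at most
\[
M_N\int_0^\infty t^{\gamma+N}\ee^{-cnt^2}\,\dd t=\Ocal\bigl(n^{-\frac{\gamma+N+1}{2}}\bigr).
\]
This is of strictly smaller order than the last retained term, which is of order $n^{-\frac{\gamma+N}{2}}$.

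Since $N$ is arbitrary, combining Steps 1--4 yields the asymptotic series in the sense of Poincaré. The first-order consequence used later, $\int_0^T\ee^{-cnt^2}f(t)\,\dd t\asymp g(0)\,\Gamma\left(\frac{\gamma+1}{2}\right)/\bigl(2(cn)^{\frac{\gamma+1}{2}}\bigr)$, is the case $N=1$.

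\textbf{Main obstacle.} The only step needing care is the tail estimate in Step~1 under the growth hypothesis. Once $n$ is large, the Gaussian factor dominates any exponential $\ee^{bt}$ on $[\delta,\infty)$, so this is routine. A tempting shortcut is to substitute $s=t^2$ at the outset and apply Theorem~\ref{thm:watson} directly. We avoid it because $g(\sqrt{s})$ need not be smooth at $s=0$, so the hypotheses of Theorem~\ref{thm:watson} would fail. This is why the expansion is done in $t$ before substituting monomial by monomial.
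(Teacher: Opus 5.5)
Your proof is correct and follows the same route as the paper: Taylor-expand $g$ at $0$, integrate term by term, extend each monomial integral to $[0,\infty)$, and evaluate it with the substitution $u=cnt^2$. Your version is more careful than the paper's. The paper interchanges the full infinite Taylor series with the integral and dismisses the tail informally, which tacitly requires the series of $g$ to converge to $g$ on all of $[0,T]$; your split at $\delta$, truncated Taylor expansion with remainder, and explicit exponential tail bounds need only the stated $\mathcal{C}^\infty$ and growth/integrability hypotheses.
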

\begin{proof}
We will perform an asymptotic expansion of the integral in the spirit of Laplace's method in the proof of Watson's lemma. 
We will use a Taylor expansion of $g$ around $0$ to rewrite $f$ as 
\[f(t) = \sum_{j=0}^\infty \frac{g^{(j)}(0)}{j!} t^{j+\gamma}.\]
The change of variables $u = cnt^2$ then gives us

\begin{align*}
\int_{0}^{T} \ee^{-cnt^2} f(t) \dd t &= \int_{0}^{T} \ee^{-cnt^2} \sum_{j=0}^\infty \frac{g^{(j)}(0)}{j!} t^{j+\gamma}\dd t\\
&= \sum_{j=0}^\infty \frac{g^{(j)}(0)}{j!} \int_{0}^{T} \ee^{-cnt^2} t^{j+\gamma}\dd t\\
&\approx \sum_{j=0}^\infty \frac{g^{(j)}(0)}{j!} \int_{0}^{\infty} \ee^{-cnt^2} t^{j+\gamma}\dd t\\
&= \sum_{j=0}^\infty \frac{g^{(j)}(0)}{j!} \int_{0}^{\infty} \ee^{-u} \left(\frac{u}{cn}\right)^{\frac{j+\gamma}{2}} \frac{1}{2\sqrt{cn u}} \dd u\\
&= \sum_{j=0}^\infty \frac{g^{(j)}(0)}{j!} \frac{1}{2 (cn)^{\frac{j+\gamma+1}{2}}} \int_{0}^{\infty} \ee^{-u} u^{\frac{j+\gamma-1}{2}} \dd u\\
&= \sum_{j=0}^\infty \frac{g^{(j)}(0)}{j!} \frac{\Gamma\left(\frac{j+\gamma+1}{2}\right)}{2 (cn)^{\frac{j+\gamma+1}{2}}}.
\end{align*}
Where the approximation in the fourth line is valid for large $n$ as the exponential decay makes the contribution of the tail negligible.
\end{proof}

\subsubsection{Change of variables}

Now let us introduce a change of variables that is at the heart of the following derivation. 
We set $F_X(y) = 1 - \frac{z}{n}$.
While this might not be completely intuitive, let us examine the effect on the recurring terms
$F_X^n$ and $\ee^{\frac{n}{2} (1-F_X)}$. We have
\[
  F_X^n(y) = \left(1 - \frac{z}{n}\right)^n \approx \ee^{-z}
\]
for large $n$ as well as 
\[
  \ee^{\frac{n}{2}(1-F_X(y)} = \ee^{\frac{z}{2}}.
\]
Within the integral, this changes the integration boundaries from $[q, \infty]$ to $[0, n(1-q)]$ and the Jacobian is $\frac{\dd z}{\dd y} = - n f_X(y)$.
Note that every term in the integral is proportional to $f_X^2$, so we can factor out $f_X$ and use it for the change of variables.

Similarly, the change of variables $F_X(y)= 1-t$ (or $z=nt$) can be done and leads to the same simplifications, but now the integration boundaries are $[0, 1-q]$ and the Jacobian is $\dd t = -\frac{\dd z}{n} = -\frac{\dd y}{f_X(y)}$.
The exponential terms in turn become $\ee^{\frac{n}{2} t}$ and $\ee^{-nt}$ (where we retained the approximation for large $n$).

\subsubsection{Asymptotic approximation for heavy tailed distributions}

We assume that the base distribution $F_X$ is heavy tailed with parameter $\gamma > 0$.
Many of the results in this section are based on the theory of extreme value distributions, see \cite{haan2006extreme} for a comprehensive introduction.

In the considered integrals we have terms either proportional to $f'_X$ or $f^2_X$. Given that we are in the
asymptotic regime for $F_X$ (i.e. $q$ is large), \cite{haan2006extreme}, by combining Theorem 1.1.8 and Remark 1.2.8 we get
\[\lim_{t\to x^*} \frac{(1-F_X(t))f'_X(t)}{f^2_X(t)} = -\gamma-1\]
if $F_X$ is sufficiently regular (i.e. $f_X$ and $f'_X$ exist), which shows that in this case $f'_X$ and $f^2_X$ are closely related.
Further, \cite{haan2006extreme}, Theorem 1.2.1 gives necessary and sufficient conditions for $F_X$ to be heavy tailed with parameter $\gamma$.

This means that we asymptotically have
\begin{align}\label{eq:asymptotic_f_prime}
    f'_X(y) \asymp -(\gamma+1) \frac{f^2_X(y)}{(1-F_X(y))}.
\end{align}

We will first work through the asymptotic approximation for Frechet class distributions.
Theorem 1.2.1 in~\cite{haan2006extreme} states that in order for the distribution $F_X$ to be in the 
Frechet class, $1-F$ must be regularly varying at infinity with index $-\frac1\gamma$, where $\gamma > 0$. The representation
theorem for regularly varying functions, Theorem B.1.6 in~\cite{haan2006extreme}, then gives us the
approximation
\begin{align}\label{eq:reg_var_approx}
    1-F_X(y) \approx C y^{-\frac1\gamma}
\end{align}
for large $y$ with a positive constant $C$.
Using this, the density is then given by the derivative
\[f_X(y) \approx -\frac{d}{dy} \left( C y^{-\frac1\gamma}\right) = \frac{C}{\gamma} y^{-(\frac1\gamma+1)}.\]
This approximation is again valid for large $y$. By multiplying Equation~\eqref{eq:reg_var_approx} by $n$, we see
\[z= n C y^{-\frac1\gamma}\]
or equivalently
\[y = \left(\frac{nC}{z}\right)^{\gamma}\]
so we arrive at 
\begin{align}
    f_X(z) \approx \frac{C}{\gamma} \left(\frac{nC}{z}\right)^{-\gamma(\frac1{\gamma}+1)} = \frac{\tilde C}{\gamma}\left(\frac{z}{n}\right)^{\gamma+1}
\end{align}
with a new constant $\tilde C$. 

For a Weibull class distribution there is a similar result given in Theorem 1.2.1 in~\cite{haan2006extreme}, which states that the distribution is in the domain of attraction for an extreme value distribution $G_\gamma$ for $\gamma < 0$ if
\begin{enumerate}
\item Its endpoint $x^*$ is finite, i.e. $\lim_{x\to x^*} F_X(x) = 1$.
\item The distribution is regularly varying at $x^*$ with index $-\frac1\gamma$, which means \[\lim_{t\downarrow 0} \frac{1-F(x^* - tx)}{1-F(x^* - t)} = x^{-\frac1\gamma}.\]
\end{enumerate}
This is similar to the Frechet case, but now we have a finite endpoint $x^*$.
In the same spirit, we can use the representation theorem (this time not around infinity but around zero) to obtain
\[1-F_X(x^* - t) \approx C t^{-\frac1\gamma}\]
for $t$ close to zero, which in turn leads to 
\[1-F_X(y) \approx C (x^* - y)^{-\frac1\gamma}\]
for $y$ close to $x^*$. 
The density is then given by the derivative
\[f_X(y) \approx -\frac{d}{dy} \left( C (x^* - y)^{-\frac1\gamma}\right) = -\frac{C}{\gamma} (x^* - y)^{-\frac1\gamma - 1}.\]
Note that for $\gamma < 0$ this expression is positive, as required. The sign is absorbed into the constant $\tilde{C}$ in the expression below.
By again using the substitution $z = n(1-F_X(y))$, we can rewrite this as
\[\frac{z}{n} \approx C (x^* - y)^{-\frac1\gamma}\] or equivalently
\[x^* - y \approx \left(\frac{z}{nC}\right)^{-\gamma}.\]
Plugging this into the density approximation, we arrive at
\begin{align}
    f_X(z) \approx \frac{C}{\gamma} \left(\frac{z}{nC}\right)^{\gamma(\frac1{\gamma}+1)} = \frac{\tilde C}{\gamma}\left(\frac{z}{n}\right)^{\gamma+1}
\end{align}
with a new constant $\tilde C$. This is the same expression as in the Frechet case, but now we have a finite endpoint $x^*$ and negative $\gamma$.

\subsubsection{The Gumbel Case ($\gamma=0$)}

The Gumbel class represents the boundary case where $\gamma=0$. As established in our prior discussion, these distributions do not have a simple power-law tail but rather one that decays exponentially. Consequently, there is no single universal monomial for $f_X(z)$. However, we can derive the form for the most important members.

A sufficient condition for a distribution to be in the Gumbel domain of attraction is again given in Theorem 1.2.1 of~\cite{haan2006extreme}. 
In this case, $x^*$ can be both finite or infinite and \[\lim_{x\to x^*} \frac{1-F_X(x+t a(x))}{1-F_X(x)} = e^{-t}\] for all real $t$. 
The function $a$ is a suitable positive function.
If it exists, the following is finite and $a$ can be chosen to be 
\[a(x) = \frac{\int_x^{x^*} 1-F(s) \dd s}{1-F(x)}\]
for $x < x^*$. 
Theorem 1.2.6 in~\cite{haan2006extreme} gives another representation for $1-F_X(x)$, namely there exist positive functions $c$ and $d$, with $d$ continuous,
such that for all $x\in(x_0, x^*)$ we have 
\[1-F_X(x) = c(x)\exp\left(-\int_{x_0}^{x} \frac{\dd s}{d(s)}\right)\]
where $\lim x\to x^* c(x) = c\in(0,\infty)$ and $\lim_{x\to x^*} d'(x) = 0$ as well as $\lim_{x\to x^*} d(x) = 0$.

Taking the derivative of this expression, we find the density function
\[f_X(x) = \left(\frac{1}{d(x) - \frac{c'(x)}{c(x)}}\right) (1-F_X(x)).\]

The condition on $c$ implies that it is slowly varying and Theorem B.1.6 in~\cite{haan2006extreme} then implies that $\frac{c'(t)}{c(t)} = o(\frac{1}{t})$,
so it goes to zero faster than $\frac{1}{t}$ as $t\to\infty$.
Typically the term $\frac{1}{d(s)}$ is will decay slower than this and thus dominates the expression, so asymptotically we have
\[f_X(x) \approx \frac{1-F_X}{d(x)}\]
for large $x$. The exact form of $d(x)$ depends on the specific distribution, but it is often a slowly varying function, we will
look at two examples below.

\paragraph{1. The Exponential Distribution:}
For an exponential distribution with rate $\lambda$, we have $1-F_X(y) = e^{-\lambda y}$ and $f_X(y) = \lambda e^{-\lambda y}$.
Our substitution $z = n(1-F_X(y))$ becomes $z = n e^{-\lambda y}$.
By directly substituting this into the expression for the density, we get:
\[f_X(z) = \lambda \left(\frac{z}{n}\right)\]

\paragraph{2. The Normal Distribution:}
The standard normal distribution is also in the Gumbel domain of attraction. 
The tail can be approximated for large $y$ as $1-F_X(y) \approx \frac{1}{y\sqrt{2\pi}}e^{-y^2/2}$. 
The density is $f_X(y) = \frac{1}{\sqrt{2\pi}}e^{-y^2/2}$.
The substitution $z = n(1-F_X(y))$ leads to the leading-order relationship $y \approx \sqrt{2\ln(n/z)}$.
Substituting this back into the density gives:
\[f_X(z) \approx \frac{1}{\sqrt{2\pi}}\exp\left(-\frac{2\ln(n/z)}{2}\right) = \frac{1}{\sqrt{2\pi}}\exp(-\ln(n/z)) = \frac{1}{\sqrt{2\pi}}\left(\frac{z}{n}\right)\]

For both of these canonical examples in the Gumbel class, the density function in terms of $z$ takes the linear form $f_X(z) \propto z/n$.
This is the same functional form we would get by formally setting $\gamma=0$ in the general expression $\frac{\tilde C}{\gamma}\left(\frac{z}{n}\right)^{\gamma+1}$,
confirming the consistency of our approximations across the boundary of the classes.
However, it is important to note that in general, this representation will depend on the exact form of the function $d$. 
We will for the sake of simplicity assume this functional form for the Gumbel class, but bear in mind that it is not a universal representation. 

\subsubsection{Integration procedure}

The computation of the complete integral is involved, but follows along a clear line of reasoning for each component.
We will first split the integral into its components, i.e. we will consider the integrals of $b_0$, $b_2$, and $V_2$ separately.

Then we will perform the change of variables to $z$ and use the asymptotic approximations for the density function $f_X(z)$.
Further, the derivative of $f_X$ will be approximated using Equation~\eqref{eq:asymptotic_f_prime}.
This will lead to integrals of the form found in Watson's lemma, which we can then analyze by finding the leading order term in the integrand.
Application of Watson's lemma shows that we then only need to compute $g(0)$ and this already determines the integral asymptotically.

\subsection{Asymptotic Analysis of $D_q$}

\subsubsection{The $b$ integral}

We have
\begin{align}
    b_0(y) &= F_X^{n}(y) (\sqrt{F_X} \ee^{\frac{n}2 (1-F_X)} - 1),
\end{align}
by rewriting this in terms of $t = 1-F_X$ and using the approximation $(1-t)^{n} \approx \ee^{-nt}$ for large $n$, we get
\begin{align*}
    b_0(z) &= \ee^{-nt}\left(\sqrt{1-t} \ee^{\frac{nt}{2}} - 1\right)
\end{align*}

Next we perform the change of variables for
\begin{align}
    b_{2}(y) &= \beta(y)\frac{\sqrt{n}}{2}\frac{F_X^{n}(y)}{\sqrt{(1-F_X(y))}}\left(\sqrt{n F_X (1-F_X)} - 1\right) \ee^{\frac{n}2 (1-F_X)}+ \\&
    + \frac{n}{2}f'_X(y)\mu_{K,2} F_X^{n-1}(y) \sqrt{F_X} \ee^{\frac{n-1}2 (1-F_X)} \\&
    + \frac{\alpha^2(y)}{8}\frac{F_X^{n-3}(y)}{1-F_X(y)} \left((n-2)z_2(n, c)\rho(n, c) - \frac{1}{c}z_1(n-1, c)\rho(n-1, c)\right)
\end{align}
This part is a bit more work to simplify. 
We have 
\[z_1(n, c) = \frac{c(n-nc^2-1)}{(nc^2 +1)^2},\]
where $c= \frac{1}{\sqrt{n}} \sqrt{\frac{1-F_X}{F_X}} = \frac{1}{\sqrt{n}} \sqrt{\frac{t}{1-t}}$.
Note that $nc^2 = \frac{1-F_X}{F_X} = \frac{t}{1-t}$ and $nc^2+1 = \frac1{F_X} =\frac{1}{1-t}$, which is independent of $n$.
We obtain
\[\frac1c z_1(n,t) = (1-t)(n(1-t)-1).\]

Also note that
\[\rho(n, t) = \sqrt{1-t}\ee^{\frac{nt}{2}}.\]

Continuing with the next coefficient, we see
\begin{align*}
    z_2(n) &= \frac{2n^3c^6 + (3n^2 - 5n^3)c^4 + (n^3 - 4n^2)c^2 + n - 1}{(n-1)(nc^2 + 1)^4}\\
        &= \frac{2(nc^2)^3+ (3 - 5n) (nc^2)^2+ (n^2-4n)nc^2 + n - 1}{(n-1)(nc^2 + 1)^4}\\
\end{align*}
Plugging in the relations for $nc^2$ we can rewrite the expression as 
\[
    z_2(n, t) = \frac{2(1-t)t^3 + (3-5n)(1-t)^2t^2 + n(n-4)t(1-t)^3 + (n-1)(1-t)^4}{(n-1)},
\]
which is simply a polynomial in $t$ (as is $\frac1c z_1$). We set $(n-2) z_2 - \frac1c z_1 =: p(n,t)$. 
Note especially that $p(n, 0) = -1$, a fact that we will use in the application of Watson's lemma.

Using the representation $f'_X(t) = -\frac{(\gamma + 1)}{t} f^2_X(t)$ in the asymptotic range, we see that
\begin{align*}
    \beta(t) &= \mu_{K, 2}f'_X(\underbrace{\frac{\mu_{K^2, 2}}{2\mu_{K, 2}}}_{=:\kappa} - F_X)\\
             &\approx -\mu_{K,2} \frac{(\gamma + 1)}{t} f^2_X(t) \left(\kappa - 1 +t\right).
\end{align*}

So finally 
\begin{align*}
b_2 &= -\mu_{K,2} \frac{(\gamma + 1)}{t} f^2_X(t) \ee^{-\frac{nt}{2}}\underbrace{\left(\frac{\sqrt{n}}{2\sqrt{t}}\left(\sqrt{nt(1-t)} - 1\right)(\kappa -1 +t) + \frac{n}2 \sqrt{1-t}\right)}_{=:q(n,t)} +\\
&+ \frac{\mu_{K^2,1}^2 f^2_X(t)}{8}\frac{\ee^{-\frac{nt}{2}}}{t} p(n, t)
\end{align*}

Again note that we have $q(n,t) = \frac1{\sqrt{t}} g(n,t)$ with $g(n,0) = \frac{\sqrt{n}}{2} (1-\kappa)$.

Putting all this together, we arrive at the first integral
\begin{align}\label{eq:b2_integral}
\int_{q_F}^\infty b_0(y) b_2(y) \dd y &= \int_0^{(1-q)}b_0(t)b_2(t)\frac{\dd t}{f_X(t)}\\
 &= \int_{0}^{(1-q)} \frac{f_X(t)}{t} \ee^{-\frac{3nt}{2}}\left(\sqrt{1-t} \ee^{\frac{nt}{2}} - 1\right)\left(\frac{\mu^2_{K^2, 1}}{8}p(n,t) - \mu_{K,2}(\gamma+1)q(n,t)\right)\dd t
\end{align}

We now use the approximations for $f_X(t)$ established in the previous section.
For a heavy tailed distribution with parameter $\gamma\neq 0$, we have
\[f_X(z) \approx D t^{\gamma+1},\]
where $D$ is a constant that depends on the distribution. We assume that this also holds for $\gamma=0$, but depending on the distribution this might not be the case.

By plugging this into Equation~\eqref{eq:b2_integral}, setting $q = \frac{g}{\sqrt{t}}$ and simplifying we find
\begin{align*}
    \int_{q_F}^\infty b_0(y) b_2(y) \dd y &= \int_{0}^{(1-q)} D t^{\gamma-\frac12} \ee^{-\frac{3nt}{2}}\left(\sqrt{1-t} \ee^{\frac{nt}{2}} - 1\right)\left(\sqrt{nt}\frac{\mu^2_{K^2, 1}}{8}p(n,t) - \mu_{K,2}(\gamma+1)g(n,t)\right)\dd t\\
\end{align*}

We will use symbolic computation (SymPy\footnote{The SymPy code used for this computation and the one in the following section is available at \url{https://github.com/DataLabHell/ddevd_symbolic}}
) to evaluate the leading order term of this integral, as the expression is quite involved.
We will shortly describe our approach here.
First, all polynomial terms in $t$ are expanded. By linearity of the integral, we can then consider each term separately.
Then, we apply Watson's lemma (including the derivatives) to each term. Then we sum up all contributions to get the final leading order term.
The result of this computation yields the leading order asymptotic behavior for the $b_0 b_2$ integral:
\begin{align*}
\int_{q_F}^\infty b_0(y) b_2(y) \dd y \asymp - C_b n^{-\gamma}
\end{align*}
Note that the application of Watson's lemma hinges on the fact that $\gamma > -\frac12$, because otherwise the exponent in the integral would be smaller than $-1$.

\subsection{The Variance Integral}

The computation for the second part of the integral containing the term $V_2$ works in a similar fashion.
Recall
\begin{align}
    V_{2}(y) \approx& \frac{\beta(y) n F_X^{2n-1}(y)}{\sqrt{n F_X(y)(1-F_X(y))}} \left( z_1(2n, c)\rho(c, 2n) - z_1(n, c)\rho(c, n)^2 \right)\label{eq:firstline_v} \\
    &+ n f'_X(y)\mu_{K,2} F_X^{2n-1}(y) \left( \rho(c, 2n-1) - \rho(c, n)\rho(c, n-1) \right) \label{eq:secondline_v}\\
    &+ \frac{\alpha^2(y) F_X^{2n-2}(y)}{4 F_X(y)(1-F_X(y))} \left( \mathcal{Z}(2n,c) - \rho(c,n)\mathcal{Z}(n,c) \right) \label{eq:thirdline_v}\\
    &- \frac{\alpha^2(y) n F_X^{2n-2}(y)}{F_X(y)(1-F_X(y))} z_1(n,c)^2\rho(n,c)^2.\label{eq:fourthline_v}
\end{align}

In the following integrals, terms of the form $\ee^{-\frac{ct^2}{1+t}}$ will appear. To bring them in the standard form for Watson's lemma, we
observe that $\frac{-t^2}{1+t} = \frac{1-t^2}{1+t} - \frac{1}{1+t} = 1 - t - \frac{1}{1+t}$, so we can rewrite the exponential as
$\ee^{-\frac{ct^2}{1+t}} = \ee^{-ct}\ee^{\frac{ct}{1+t}}$, where the second factor one for $t=0$.

This expression is even more complicated than the one for $b_0b_2$, however we can use the same asymptotic approximations for $f_X$ and $f'_X$ as above.
As a first step, we note all expressions occurring in the integral in terms of $t$, as in the previous section.
We have
\begin{align*}
    f_X(z) &\approx D t^{\gamma+1},\\
    \alpha(z) &= D \mu_{K^2, 1} t^{\gamma+1},\\
    \beta(z) &= -\mu_{K,2}\frac{(\gamma+1)}{t} f^2_X(t) \left(\kappa - 1 +t \right), \\
    \rho(c, n) &= \sqrt{1-t}\ee^{\frac{nt}{2}} \approx \rho(c, n-1),\\
    \rho(c, 2n) &= \sqrt{\frac{1-t}{1+t}}\ee^{\frac{2nt}{1+t}} \approx \rho(c, 2n-1),\\
    z_1(n, c) &= \sqrt{t(1-t)}\left(\sqrt{n}(1-t) - \frac1{\sqrt{n}}\right),\\
    z_1(2n, c) &= \sqrt{t(1-t)}\left(2\sqrt{n}(1-t)-\frac{1+t}{\sqrt{n}}\right)\\
\end{align*}

We also have
\begin{align*}
\mathcal{Z}(N,t) &= (N-2)z_2(N, t)\rho(c, N) - \frac{1}{c}z_1(N-1, t)\rho(c, N-1)
\end{align*}

Using the Python symbolic computation framework `sympy`~\cite{meurer2017sympy}, we evaluate the dominant terms of these integrals by applying the (modified) Watson's lemma to terms of the form $\ee^{-nt^2}$.
Specifically, we observe that the prefactors in the variance term scale as $n$, while the integral over the kernel density scales as $n^{-\gamma/2}$.
The symbolic integration yields the following leading order asymptotic behavior for the total variance integral:

\begin{align*}
\int_{q_F}^\infty V_{2}(y) \dd y \asymp C_V n^{1-\frac{\gamma}{2}}
\end{align*}
with $C_V = D \mu_{K^2,1}^2 \Gamma(\gamma/2 + 1)  2^{-1-\frac{\gamma}{2}} > 0$.
Crucially, this term scales as $\mathcal{O}(n^{1-\frac{\gamma}{2}})$. 

\subsection{Final Result}
Summing the asymptotics, we require $D_q > 0$. The condition becomes:
\[ m < -\frac{\int_{q_F}^\infty V_{2}(y) \dd y}{2 \int_{q_F}^\infty b_0(y) b_2(y) \dd y} \]
Using the computationally derived leading order terms:
\begin{align*}
    \int_{q_F}^\infty b_0(y) b_2(y) \dd y &\asymp -\frac{C_b}{n^\gamma} \\
    \int_{q_F}^\infty V_2(y) \dd y &\asymp \frac{C_V n}{n^{\frac{\gamma}{2}}}
\end{align*}
This results in a stability bound for $m$ scaling as $n^{1+\frac{\gamma}{2}}$.
Note that the constant $C_b$ must be positive for the bound to be meaningful.
Computing it explicitly is possible but rather lengthy and done by the SymPy code.
In case of e.g. a Gaussian kernel, it can be shown that $C_b$ is indeed positive, so the bound is meaningful.
A general condition on $C_b$ is dependent on $\kappa$ and $\gamma$, but the expression is quite involved and we will not give it here.
However for a given kernel and base distribution, the accompanying code can be used to check whether $C_b$ is positive and to compute the constant explicitly.

We have made some simplifications for the Gumbel case $\gamma=0$, as it stands only the exponential distribution (or distributions with the same tail behaviour) is covered.

\section*{Acronyms}
\begin{acronym}
  \acro{ddevd}[DDEVD]{data driven extreme value distribution}
  \acro{mise}[MISE]{mean integrated squared error}
  \acro{mse}[MSE]{mean squared error}
  \acro{cdf}[CDF]{cumulative distribution function}
  \acro{pdf}[PDF]{probability density function}
  \acro{evt}[EVT]{extreme value theory}
  \acro{evd}[EVD]{extreme value distribution}
  \acro{bmm}[BMM]{block maxima method}
  \acro{gev}[GEV]{generalized extreme value}
  \acro{mev}[MEV]{metastatistical extreme value}
  \acro{pot}[POT]{peak over threshold}
  \acro{gpd}[GPD]{Generalized Pareto Distribution}
\end{acronym}

\printbibliography
\end{document}